\numberwithin{equation}{section}
\theoremstyle{plain}
\newtheorem{theorem}[subsection]{Theorem}
\newtheorem{proposition}[subsection]{Proposition}
\newtheorem{lemma}[subsection]{Lemma}
\newtheorem{corollary}[subsection]{Corollary}
\theoremstyle{definition}
\newtheorem{definition}[subsection]{Definition}
\newtheorem{remark}[subsection]{Remark}
\newtheorem{example}[subsection]{Example}
\newcommand\F{\mathscr{F}}
\newcommand\eps{\varepsilon}
  \DeclareMathOperator*{\pr}{Pr}%
  \DeclareMathOperator*\cl{cl}%
  \DeclareMathOperator*\epi{epi}%
  \DeclareMathOperator*\dom{dom}%
  \DeclareMathOperator*\id{id}%
\begin{document}

\title[]{A perturbational duality approach in vector optimization}

\author{Sorin-Mihai Grad}
\address{Institute of Mathematics, Leipzig University, Augustusplatz 10, 04109 Leipzig, Germany}
\email{grad@math.uni-leipzig.de} \thanks{SMG was partially supported by the DFG-Project GR3367/4-1.}

\author{Asgar Jamneshan}
\address{Department of Mathematics, ETH Zurich, 8092 Zurich, Switzerland}
\email{asgar.jamneshan@math.ethz.ch} \thanks{AJ gratefully acknowledges financial support through the DFG-Project KU-2740/2-1.}

\begin{abstract}
A perturbational vector duality approach for objective functions $f\colon X\to \bar{L}^0$ is developed, where $X$ is a Banach space and $\bar{L}^0$ is the space of extended real valued functions on a measure space, which extends the perturbational approach from the scalar case. The corresponding strong duality statement is proved under a closedness type regularity condition. Optimality conditions and a Moreau-Rockafellar type formula are provided. The results are specialized for constrained and unconstrained problems. Examples of integral operators and risk measures are discussed.   
\end{abstract}

\maketitle

\setcounter{tocdepth}{1}

{\bf Keywords.} vector optimization, duality\\

{\bf AMS subject classification.} 49N15, 90C46

\section{Introduction}\label{se1}

In the scalar setting, strong duality for convex optimization problems can be achieved under closedness or interiority type regularity conditions, see e.g.~\cite{BOT,DVO,ZAL}.  The closedness type regularity conditions are viable alternatives to their more restrictive interiority type counterparts, being  successfully applied in subdifferential calculus (see e.g.~\cite{CHJ,BOT, SMG, DVO, DMN}), $DC$ programming (see e.g.~\cite{DMN, FLY}), monotone operators (see e.g.~\cite{SIA, NFA, BOT, JWU}), equilibrium theory (see e.g.~\cite{LAS}) or variational inequalities (see e.g.~\cite{ABW, CCS}). This paper introduces closedness type regularity conditions for the following class of vector optimization problems.    
Let $(X,Y,\langle\cdot,\cdot \rangle)$ be a dual pair of Banach spaces satisfying the following properties:  
\begin{itemize}
\item $|\langle x,y\rangle|\leq \|x\| \|y\|$ for all $x\in X$ and $y\in Y$, 
\item norm-closed balls in $X$ and $Y$ are weakly closed respectively\footnote{
For example, both conditions are satisfied if $Y$ is the norm dual of $X$.}.  
\end{itemize}
Let $(\Omega,\mathcal{F},\mu)$ be a $\sigma$-finite measure space and denote by $\bar{L}^0$ the space of equivalence classes of extended real valued Borel functions on $\Omega$. 
Consider on $\bar{L}^0$ the pointwise almost everywhere order. 
A perturbational approach for conjugate duality as developed by Rockafellar \cite{ROC} for scalar optimization problems is adopted in order to construct a corresponding duality for vector optimization problems of the following typeÖ  
\begin{equation}\label{mainproblem}
\inf f(x), \quad f\colon X\to \bar{L}^0. 
\end{equation} 
To this end a Fenchel-Moreau representation in \cite{DJK} and conditional analysis techniques (see \cite{CKV,DJKK,FKV,JAZ} for an introduction) are employed.  
The main idea in \cite{DJK} is to extend $X$ to the space $L^0(X)$ of equivalence classes of strongly measurable functions on $\Omega$ with values in $X$. 
Then the duality pairing $(X,Y,\langle\cdot,\cdot \rangle)$ is extended to a conditional duality pairing $(L^0(X),L^0(Y),\langle\cdot,\cdot \rangle)$ which induces a conditional weak topology on $L^0(X)$. 
The restriction of this topology to $X$ provides a notion of semi-continuity which allows to extend a function $f\colon X\to \bar{L}^0$ to the larger domain $L^0(X)$. 
In this larger context tools from conditional functional analysis \cite{FKV,JAZ} become applicable which yield a conditional Fenchel-Moreau representation.   
This can be interpreted in the original framework, and thus made fruitful for a perturbational vector duality. 
We are able to establish a corresponding strong duality statement under convexity and semi-continuity hypotheses with a closedness type regularity condition which is accompanied by necessary and sufficient optimality conditions. 
A subdifferential for functions $f\colon X \to \bar{L}^0$ is introduced which is used to characterize these optimality conditions.  
To prove these results a related Moreau-Rockafellar formula is shown. As a byproduct, a Farkas type statement is derived.     

Unconstrained optimization problems with composite objective functions appear in the scalar case for instance in image processing (see e.g.~\cite{BCH,BHS,BHE}), logistics (see e.g.~\cite{BCH}) and machine learning (see e.g.~\cite{BHM,BHS,BLO}), while constrained optimization problems can be found in mathematical economics (see e.g.~\cite{BHR, BLW}), design (see e.g.~\cite{GMW, PBH}) or engineering (see e.g.~\cite{PBH, STM}).   
As special cases of the general problem \eqref{mainproblem}, unconstrained vector optimization problems with composite objective functions and constrained vector optimization problems are studied, for which Moreau-Rockafellar formulas, duality statements and necessary and sufficient optimality conditions are provided. 

As for applications, proper convex functions $f\colon L^p(S,\mathcal{S},\nu)\to \bar{L}^0$, where $(S,\mathcal{S},\nu)$ is a possibly different finite measure space, which satisfy a Fatou continuity property  admit a vectorial Fenchel-Moreau representation. 
This class of functions include nonlinear integral operators and vector-valued and conditional risk measures for which we sketch potential applications.  
See also \cite{ACG,AFM} for related examples of vector optimization problems. 

Several concepts and results from scalar convex analysis and functional analysis are extended to a conditional or $L^0$-module framework in e.g.~\cite{CKV,DJKK,FKV,JAZ}, see also the references therein. 
A direct usage of these results for problem \eqref{mainproblem} is not possible since a Banach space is a priori not an $L^0$-module. 
The setting considered in \cite{DJK} and in the present article require extension results established in \cite{DJK} to take advantage of results in conditional functional analysis. 
As the topological dual space of $L^0$ is trivial in general and the interior of the positive cone in $L^0$ is empty, existing scalarization methods (see e.g.~\cite{GPQ}) cannot be employed. The same applies to set-valued methods or vector-space techniques, see \cite{DJK} for a discussion.
The existing results on vector and set-valued optimization (see \cite{BIS,DVO,SMG,JAH,KTZ} for an overview) cannot be applied neither because of the different frameworks and solution concepts. While in the literature on vector optimization the dual problems are constructed with respect to various efficiency concepts \cite{DVO}, this paper works with pointwise almost everywhere optimality which generates a different duality framework that is a more direct extension of the classical scalar conjugate duality than the existing vector duality approaches. This is also stressed by the fact that the objective function of the dual problem proposed here contains the conjugate of the primal objective function while in the existing vector conjugate duality it usually consists of additionally introduced vectors that have to fulfill certain constraints \cite{DVO}.  
A similar duality approach can be constructed by means of set-valued functions (see e.g.~\cite{DVO}) but these are always accompanied by additional complications that can be avoided in the present approach. Moreover, the considered vector functions can take infinite values at some components and finite otherwise, while in classical vector optimization this is not accepted. 

The remainder of this paper is organized as follows. 
The setting and all relevant notions and results are collected in Section \ref{se2}.  
The perturbational approach to problem \eqref{mainproblem} is introduced in Section \ref{se3}, where the main strong duality statement is proved. 
Examples and specifications of the general result are discussed in Section \ref{sec4}.

\section{Preliminaries}\label{se2}

Throughout, fix a dual pair of Banach spaces $(X,Y,\langle \cdot,\cdot \rangle)$ such that
\begin{itemize}
\item $|\langle x,y\rangle|\leq \Vert x\Vert \Vert y\Vert$ for all $x\in X$ and for all $y\in Y$,
\item and the norm-closed unit balls in $X$ and $Y$ are weakly closed\footnote{Which refers to the initial topologies on $X$ and $Y$ induced by all functionals $\langle \cdot,y\rangle$, $y\in Y$, and all functionals $\langle x,\cdot \rangle$, $x\in X$, respectively.} respectively.
\end{itemize}
An example of such a dual pair is a Banach space paired with its norm dual, see \cite[Section 2]{DJK} for more examples.
Fix also a second dual pair of Banach spaces $(W,Z,\langle \cdot,\cdot\rangle)$ with the same properties. 
Let ${\pr}_W(A)$ denote the projection to $W$ of a set $A\subseteq X\times W$. 
By $\id_X$ we denote the identity operator on $X$. 

Unless specified otherwise, let $(\Omega,\mathcal{F},\mu)$  be a $\sigma$-finite measure space. 
We will identify two measurable sets if their symmetric difference is a null set.
This leads to the associated measure algebra which is a complete Boolean algebra, see \cite[Chapter 31]{GHA} for more details.
Always identify two functions on $\Omega$ if they agree almost everywhere (a.e.).
Let $\bar{L}^0, L^0,L^0_+$ and $L^0_{++}$ denote the spaces of measurable functions on $\Omega$ with values in $[-\infty,+\infty]$, $(-\infty,+\infty)$, $[0,\infty)$ and $(0,\infty)$, respectively.
We always consider on $\bar{L}^0$ the order $s\leq t$ if $s(\omega)\leq t(\omega)$ a.e.
An important property of this order is that it is complete on $\bar{L}^0$ and Dedekind complete if restricted to $L^0$, see e.g.~\cite{FRE}. 
The essential supremum and the essential infimum are denoted by $\sup$ and $\inf$ respectively. 
In particular, any arbitrary family of measurable sets $(A_i)$ in $\mathcal{F}$ admits a supremum and an infimum in $\mathcal{F}$ with respect to a.e.~inclusion.
Following common practice in convex analysis, we stipulate $+\infty + (-\infty) = +\infty$, $0\cdot (+\infty) = +\infty$ and $0\cdot (-\infty) = 0$ (which in $\bar{L}^0$ are understood pointwise a.e.).

For a function $f\colon X\to \bar{L}^0$, \emph{proper} and \emph{convex} are defined by
\begin{itemize}
\item $f(x)>-\infty$ for all $x\in X$ and $f(x_0)<+\infty$ for some point $x_0\in X$,
\item and $f(rx_1+(1-r)x_2)\leq rf(x_1)+(1-r)f(x_2)$ for all $x_1,x_2\in X$ and $r\in[0,1]$, respectively.
\end{itemize}
For a proper function $f\colon X\to \bar{L}^0$, let  ${\dom}(f):=\{x\in X\colon f(x)\in L^0\}$ define its domain and ${\epi}(f):=\{(x,t)\in X\times L^0\colon f(x) \leq t\}$ its epigraph.
The following properties can be directly verified from the definitions.
\begin{itemize}
\item  $f\colon X\to \bar{L}^0$ is convex if and only if $\epi (f)$ is convex.
\item Let $\Phi\colon X\times W\to \bar{L}^0$ be a proper convex function, then $\inf_{w\in W} \Phi(\cdot, w)\colon X\to \bar{L}^0$ is convex.
\end{itemize}
\begin{example}
As a consequence of the previous properties, one can show that the \textit{infimal convolution} $f\square g\colon X\to \bar{L}^0$ of two proper convex functions $f,g\colon X\to \bar{L}^0$ defined by
\[
f\square g (x)=\inf_{p\in X}\{f(p) + g(x-p)\}
\]
is convex as well.  
Analogously, the conditional infimal convolution $F\square G\colon L^0(X)\to \bar{L}^0$ of two proper $L^0$-convex (see definitions below) functions $F,G\colon L^0(X)\to \bar{L}^0$  is defined by 
\[
F\square G (x)=\inf_{p\in L^0(X)}\{F(p) + G(x-p)\}
\]
which is $L^0$-convex.
\end{example}
Our analysis of minimizing a function $f\colon X\to \bar{L}^0$ relies on extensions. 
To this end, we consider different spaces of functions on $\Omega$ with values in $X$. 
Let $L^0_s(X)$ denote the space of step functions $x\colon \Omega\to X$, i.e.~functions whose range is essentially countable. 
Each such function can be represented by $\sum_{n} x_n 1_{A_n}$ where $(x_n)$ is a sequence in $X$ and $(A_n)$ is a measurable partition of $\Omega$, where by $1_A$ we denote the standard characteristic function of a set $A\subseteq \Omega$ defined by $1_A(\omega) = 1$ if $\omega\in A$ and $1_A (\omega)=0$ otherwise. 
We identify $X$ with a subset of $L^0_s(X)$ by the embedding $x\mapsto x 1_\Omega$.
The norm of $X$ can be extended to $L^0_s(X)$ with values in $L^0_+$ via $\|\sum_n x_n 1_{A_n}\|:=\sum_n \|x_n\| 1_{A_n}$. 
Then a function $f\colon X\to \bar{L}^0$ \emph{extends} to $F_s\colon L^0_s(X)\to \bar{L}^0$ by defining $F_s(\sum_n x_n 1_{A_n}):=\sum_n f(x_n)1_{A_n}$. 

Now let $L^0(X)$ denote the space of \emph{strongly measurable} functions $x\colon \Omega\to X$.
Then the norm of $X$ extends to $L^0(X)$ with values in $L^0_+$ by $\|x\|:=\lim_{n\to \infty} \|x_n\|$ where $(x_n)$ is a sequence in $L^0_s(X)$ such that $x_n\to x$ a.e.
Notice that for each $x\in L^0(X)$ and every $t\in L^0_{++}$ there exists $\tilde{x}\in L^0_s(X)$ such that $\|x-\tilde{x}\|<t$.
In order to extend a function $F_s\colon L^0_s(X)\to \bar{L}^0$ to the larger domain $L^0(X)$ (and thus a function $f\colon X\to \bar{L}^0$) a semi-continuity condition is required which is introduced next. 
A function $F\colon L^0(X)\to \bar{L}^0$ is said to be 
\begin{itemize}
\item \emph{local} (or \emph{stable}) if $F(1_A x)=1_A F(1_Ax )$ for all $A\in\mathcal{F}$ and $x\in L^0(X)$, or equivalently $F(\sum_n x_n1_{A_n})=\sum_n F(x_n) 1_{A_n}$ for all measurable partitions $(A_n)$ of $\Omega$ and every sequence $(x_n)$ of $L^0(X)$,  
\item  \emph{$L^0$-linear} if $F(r x_1 + x_2)= r F(x_1) + F(x_2)$ for all $x_1,x_2\in L^0(X)$ and $r\in L^0$, 
\item  \emph{$L^0$-convex} if $F(r x_1 + (1-r) x_2)\leq r F(x_1) + (1-r) F(x_2)$ for all $x_1,x_2\in L^0(X)$ and $r\in L^0$ such that $0\leq r\leq 1$, 
\item  \emph{proper} if  $F(x)>-\infty$ for all $x\in L^0(X)$ and $F(x_0)<+\infty$ for some $x_0\in L^0(X)$. 
\end{itemize}
For a stable and proper function $F\colon L^0(X)\to \bar{L}^0$, let ${\dom}(F):=\{x\in L^0(X)\colon F(x)\in L^0\}$ and ${\epi}(F):=\{(x,t)\in L^0(X)\times L^0\colon F(x) \leq t\}$ be its domain and epigraph respectively. 

A set $H\subseteq L^0(X)\times L^0$ is said to be
\begin{itemize}
\item   \emph{stable} if $H\neq \emptyset$ and $\sum_n (x_n,t_n)1_{A_n}\in H$ for all sequences $(x_n,t_n)$ in $H$ and every measurable partition $(A_n)$ of $\Omega$;
\item  \emph{$L^0$-convex} if $r (x_1,t_1)+ (1-r)(x_2,t_2)\in H$ for all $(x_1,t_1),(x_2,t_2)\in H$ and $\lambda\in L^0$ with $0\leq r\leq 1$.
\end{itemize}
For example ${\dom}(F)$ and ${\epi}(F)$ are stable sets for a proper and stable function $F\colon L^0(X)\to \bar{L}^0$. 
We show next how to construct from a stable set $H$ in $L^0(X)\times L^0$ a function $F_H\colon L^0(X)\to \bar{L}^0$.
For $(x,t)\in L^0(X)\times L^0$, let 
\[
A(x,t):=\sup\{A\in \F\colon (x,t)1_A\in H 1_A\}. 
\]
We show that $A(x,t)$ is attained.
By the properties of the essential supremum, we find a countable sequence $(A_n)$ with $(x,t)1_{A_n} \in H 1_{A_n}$ for each $n$ and $A(x,t)=\cup_n A_n$.
Let $B_n=A_n\cap (\cup_{m<n} A_m)$ for each $n$.
Then $(x,t)1_{B_n} \in H 1_{B_n}$ for every $n$, and $A(x,t)=\cup_n B_n$.
As $H$ is stable, we have $(x,t)1_{A(x,t)}\in H1_{A(x,t)}$.

We define the \emph{stable lower bound function} $F_H\colon L^0(X)\to \bar{L}^0$ by 
\[
F_H(x):=\inf\{t\in L^0\colon (x,t) 1_{A(x,t)}\in H1_{A(x,t)}\}1_{\cup_{(x,t)\in L^0(X)\times L^0} A(x,t)}+(+\infty)1_{(\cup_{(x,t)\in L^0(X)\times L^0} A(x,t))^c}, 
\]
where superscript $c$ denotes complementation. 
The following properties can be verified from the constructions.
\begin{itemize}
\item A proper function $F\colon L^0(X)\to \bar{L}^0$ is stable if and only if $\epi(F)$ is a stable set in $L^0(X)\times L^0$.
\item A proper function $F\colon L^0(X)\to \bar{L}^0$ is $L^0$-convex if and only if $\epi(F)$ is $L^0$-convex.
\item If $F\colon L^0(X)\times L^0(W)\to \bar{L}^0$ is a proper $L^0$-convex function, then $x\mapsto \inf_{w\in L^0(W)} F(x,w)$  is $L^0$-convex.
\item If $H$ is a stable and $L^0$-convex set in $L^0(X)\times L^0$, then $F_H$ is a stable and $L^0$-convex function.
\item Let $F\colon L^0(X)\to \bar{L}^0$ be a stable proper function and $H\subseteq L^0(X)\times L^0$ be a stable set.
Then $F=F_{\text{epi}(F)}$ and $\epi(F_H)=H$.
\end{itemize}
Our notion of semi-continuity stems from so-called conditional topologies.
We refer the interested reader to \cite{DJKK,JAZ} for an introduction to conditional topologies. 
The conditional Euclidean topology on $L^0$, denoted by $\tau$, is given by the following base: 
\[
\{\{a\in L^0\colon |a-b|<t\} \colon t\in L^0_{++}, b\in L^0\}.
\]
The duality pairing on $X\times Y$ extends to $L^0(X)\times L^0(Y)$ with values in $L^0$ by $\langle x , y \rangle:= \lim_{n\to \infty} \langle x_n, y_n\rangle$, where $(x_n)$ is a sequence in $L^0_s(X)$ such that $x_n\to x$ a.e.~and $(y_n)$ is a sequence in $L^0_s(Y)$ such that $y_n\to y$ a.e., and the extension of the duality pairing to $L^0_s(X)\times L^0_s(Y)$ is defined in the natural way.  One has $|\langle x,y\rangle |\leq \|x\|\|y\|$ for all $x\in L^0(X), y\in L^0(Y)$.
By \cite[Lemma 3.1]{DJK}, we also have that $\langle x,y\rangle =0$ for all $x\in L^0(X)$ implies $y=0$, and similarly, $\langle x,y\rangle =0$ for all $y\in L^0(X)$ implies $x=0$.
Thus $(L^0(X),L^0(Y),\langle \cdot,\cdot\rangle)$ defines a conditional dual pair of conditional Banach spaces, see \cite{DJKK,JAZ}.
Let $t\in L^0_{++}$ and $y_1,y_2,\ldots,y_m\in L^0(Y)$. A basic conditional weak neighborhood of $0\in L^0(X)$ is defined by
\[
V^t_{y_1,y_2,\ldots,y_m}:=\cap_{k=1}^m\{x\in L^0(X)\colon |\langle x,y_k\rangle| < t \}.
\]
Let $(A_n)$ be a measurable partition of $\Omega$, $(y_k)_{k=1}^{m_n}$ be a finite sequence in $L^0(Y)$ for each $n$, and $(t_n)$ be a sequence in $L^0_{++}$.
A concatenation of basic neighborhoods is defined by
\[
\{\sum_n x_n 1_{A_n}\colon x_n\in  V^{t_n}_{y_1,y_2,\ldots,y_{m_n}}\}.
\]
The collection of all such concatenations forms a local base of a topology on $L^0(X)$ which will be denoted by $\sigma(L^0(X),L^0(Y))$. 
Then $(L^0(X),\sigma(L^0(X),L^0(Y)))$ will be a topological $L^0$-module (where $L^0$ is endowed with the topology $\tau$), see \cite{JAZ} for a reference. 
The conditional weak topology on $L^0_s(X)$ is defined by relativizing the topology $\sigma(L^0(X),L^0(Y))$ to $L^0_s(X)\subseteq L^0(X)$.
One way to formalize convergence in a topological space is through nets.
By construction, if we restrict attention to nets which respect concatenations, then we do not violate their limiting behavior.
More precisely, if $(x_\alpha)$ is a net in $L^0(X)$, then we suppose that each $\alpha$ is a measurable function on $\Omega$ such that if $(A_n)$ is a measurable partition of $\Omega$ and $(\alpha_n)$ is a sequence of indices such that $\alpha  = \alpha_n$ on $A_n$ for all $n$ for some index $\alpha$, then $x_{\alpha_n} = x_{\alpha}$ on $A_n$ for all $n$.
Such nets exist, see \cite{DJK} and the references therein for details, and they are called \emph{stable nets}.
We are now able to define our notion of semi-continuity.
\begin{itemize}
\item A stable function $F\colon L^0(X)\to \bar{L}^0$ is said to be \emph{$\sigma(L^0(X),L^0(Y))$-lower semi-continuous} if $F(x)\leq \liminf_\alpha F(x_\alpha)$ for every stable net $(x_\alpha)$ such that $x_\alpha\to x$ in the topology $\sigma(L^0(X),L^0(Y))$.
\item A function $f\colon X\to \bar{L}^0$ is said to be \emph{s-lower semi-continuous} if its extension $F_s\colon L^0_s(X)\to \bar{L}^0$ is $\sigma(L^0(X),L^0(Y))$-lower semi-continuous with respect to the relative topology.
\end{itemize}
By inspection, $F$ is stable, proper and $\sigma(L^0(X),L^0(Y))$-lower semi-continuous if and only if $\epi(F)$ is stable and closed.
The  \emph{$\sigma(L^0(X),L^0(Y))$-closure} ${\cl}(F)$ of $F$ is defined by $F_{\cl(\epi(F))}$ where $\cl(\epi(F))$ denotes the closure of $\epi(F)$ in the topology  $\sigma(L^0(X),L^0(Y))$. 
We have $\epi(\cl(F))=\cl(\epi(F))$. 
It can also be verified that if $F$ is stable, proper and $L^0$-convex, then so is $\cl(F)$.
\begin{remark}\label{r:lsc}
Given $F\colon L^0(X)\to \bar{L}^0$, the closure $\cl(F)$ is the largest $\sigma(L^0(X),L^0(Y))$-lower semi-continuous function bounded from above by $F$.
Indeed, since $\epi(\cl(F))=\cl(\epi(F))$ it follows that $\cl(F)$ is $\sigma(L^0(X),L^0(Y))$-lower semi-continuous and it is always less than or equal to $F$ since $\epi (F)\subseteq \epi (\cl (F))$. Taking an arbitrary $\sigma(L^0(X),L^0(Y))$-lower semi-continuous function $G\colon L^0(X)\to \bar{L}^0$ with $G\leq F$, one gets $\epi (F)\subseteq \epi (G)$, followed by $\epi(\cl(F))=\cl(\epi (F))\subseteq \cl(\epi (G))=\epi (G)$. Consequently, $G\leq \cl (F)$.
\end{remark}
We can state the second extension result which was established in \cite{DJK}.
 \begin{theorem}\label{extension}
 Let $f\colon X\to \bar{L}^0$ be s-lower semi-continuous.
 Then there exists a stable $\sigma(L^0(X),L^0(Y))$-lower semi-continuous function $F\colon L^0(X)\to \bar{L}^0$ such that $F|_X=f$.
 Moreover, if $f$ is proper convex, then $F$ is proper $L^0$-convex.
 \end{theorem}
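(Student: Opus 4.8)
The plan is to first extend $f$ explicitly to the step functions and then to pass to $L^0(X)$ by closing the epigraph. Define $F_s\colon L^0_s(X)\to\bar{L}^0$ by $F_s(\sum_n x_n1_{A_n}):=\sum_n f(x_n)1_{A_n}$; this is well defined by the a.e.\ identification, is stable by construction, and satisfies $F_s|_X=f$. By the very definition of s-lower semi-continuity, $F_s$ is $\sigma(L^0(X),L^0(Y))$-lower semi-continuous for the relative topology on $L^0_s(X)$, equivalently $\epi(F_s)$ is closed in the relative topology of $L^0_s(X)\times L^0$. If in addition $f$ is proper convex, then, reading everything on common refinements of the partitions involved, $F_s>-\infty$ on $L^0_s(X)$, $F_s(x_0)=f(x_0)<+\infty$ for a point $x_0\in X$, and $F_s$ is convex for real scalar combinations, since $ra+(1-r)b$ is again a step function when $r\in[0,1]$ is real and $a,b$ are step functions.

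Next, set $H:=\cl(\epi(F_s))$, the closure taken in $L^0(X)\times L^0$, and let $F:=F_H$ be the associated stable lower bound function. The set $\epi(F_s)$ is a stable subset of $L^0(X)\times L^0$ (a countable concatenation of step functions is a step function, and $F_s$ is stable), and since the local base of $\sigma(L^0(X),L^0(Y))$ is stable under concatenation, $H$ is again stable; $H$ is moreover upward closed in the second variable, so $H=\epi(F)$ and $F$ is stable. As $\epi(F)=H$ is closed and stable, the characterization recorded just before this theorem shows that $F$ is $\sigma(L^0(X),L^0(Y))$-lower semi-continuous.

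The key step is to check $F|_X=f$. Since $\epi(F_s)$ is closed in the relative topology of $L^0_s(X)\times L^0$, the elementary identity $\cl_Z(A)\cap Y=\cl_Y(A)$ for $A\subseteq Y\subseteq Z$ gives $H\cap(L^0_s(X)\times L^0)=\epi(F_s)$; in particular $H\cap(X\times L^0)=\{(x,t)\colon x\in X,\ f(x)\le t\}$. As $H=\epi(F)$, this means that for every constant $x\in X$ and every $t\in L^0$ one has $F(x)\le t$ if and only if $f(x)\le t$, hence $F(x)=f(x)$.

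For the ``moreover'' part, assume $f$ proper convex. Since $\epi(F_s)$ is convex for real scalar combinations, so is its closure $H$; being in addition closed and stable, $H$ is $L^0$-convex: by stability and real-convexity $H$ contains $rp_1+(1-r)p_2$ for every $L^0$-simple $r$ with $0\le r\le1$ and all $p_1,p_2\in H$, these $r$ are $\tau$-dense in $\{r\in L^0\colon 0\le r\le 1\}$, and closedness together with continuity of the $L^0$-module operations then propagates this to all $L^0$-convex combinations. Thus $F=F_H$ is $L^0$-convex. Finally $F(x_0)=f(x_0)<+\infty$, and $(x_0,f(x_0)-1)\notin\epi(F_s)=H\cap(L^0_s(X)\times L^0)$, hence $(x_0,f(x_0)-1)\notin H=\epi(F)$; separating this point from the closed, stable, $L^0$-convex set $\epi(F)$ by a conditional Hahn--Banach theorem (as available in conditional functional analysis, see \cite{FKV,JAZ,DJK}) yields a continuous $L^0$-linear form $(x,t)\mapsto\langle x,y^*\rangle+\lambda t$ and $c\in L^0$ with $\langle x,y^*\rangle+\lambda t\ge c$ on $\epi(F)$ and $\langle x_0,y^*\rangle+\lambda(f(x_0)-1)<c$; evaluating at $(x_0,f(x_0))\in\epi(F)$ forces $\lambda>0$ a.e., and then $F(x)\ge(c-\langle x,y^*\rangle)/\lambda>-\infty$ for all $x$, so $F$ is proper. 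I expect the main obstacle to be exactly this last point, namely producing a continuous $L^0$-affine minorant while keeping all the objects stable, which is where the conditional functional analysis of \cite{FKV,JAZ} and the extension results of \cite{DJK} are essential; the degenerate cases $f\equiv+\infty$ or $f$ attaining $-\infty$ are disposed of directly.
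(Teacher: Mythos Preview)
The paper does not give its own proof of this theorem; it simply records the statement and attributes it to \cite{DJK}. Your construction---extend to step functions, close the epigraph, take the associated stable lower bound function---is the natural route and almost certainly the one used in \cite{DJK}.

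There is, however, a genuine gap in your argument for $F|_X=f$. From $H\cap(X\times L^0)=\{(x,t)\colon f(x)\le t\}$ and $H=\epi(F)$ you deduce that for every $x\in X$ and $t\in L^0$, $F(x)\le t$ holds if and only if $f(x)\le t$, and then conclude $F(x)=f(x)$. This last step is not valid in $\bar L^0$: two elements $a,b\in\bar L^0$ can have the same set of global $L^0$-majorants without being equal. Concretely, if $a=+\infty$ everywhere and $b=+\infty\cdot 1_A$ with $0<\mu(A)<\mu(\Omega)$, then $\{t\in L^0\colon a\le t\}=\{t\in L^0\colon b\le t\}=\emptyset$, yet $a\neq b$. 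So whenever $f(x)=+\infty$ on a set of positive measure but not everywhere, your argument says nothing.

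The repair is to localize, using the full identity $H\cap(L^0_s(X)\times L^0)=\epi(F_s)$ together with stability. In the proper convex case, pick $x_0\in\dom(f)$ and, for a given $x\in X$ and $B\in\mathcal F$, test with the step function $\tilde x=x1_B+x_01_{B^c}$; since $F_s(\tilde x)\in L^0$ on $B=\{f(x)<+\infty\}$, the epigraph identity now determines $F(\tilde x)=F_s(\tilde x)$ there, giving $F(x)=f(x)$ on $B$, and a symmetric argument forces $F(x)=+\infty$ on $B^c$. Note that this already uses properness, so your ``degenerate cases disposed of directly'' needs to be made precise for the first assertion of the theorem. For properness of $F$, your separation argument is fine, but a quicker route is to invoke Theorem~\ref{t:FM2} to obtain $y_0\in L^0(Y)$ and $c_0\in L^0$ with $f\ge\langle\cdot,y_0\rangle-c_0$; this inequality passes to $F_s$ by stability and then to $F$ since the half-space $\{(x,t)\colon t\ge\langle x,y_0\rangle-c_0\}$ is closed.
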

We have the following conditional version of the Fenchel-Moreau theorem stated in \cite{FKV}.
The interested reader is referred to \cite{JAZ} and its references for a background and overview on results in conditional functional analysis.
\begin{theorem}\label{t:FM}
Let $F\colon L^0(X)\to \bar{L}^0$ be a proper $L^0$-convex and $\sigma(L^0(X),L^0(Y))$-lower semi-continuous function.
Then
\[
F(x)=\sup_{y\in L^0(Y)}\{\langle x,y\rangle -F^\ast(y)\},
\]
where the conditional conjugate $F^\ast\colon L^0(Y)\to \bar{L}^0$ is defined by
\[
F^\ast(y)=\sup_{x\in L^0(X)} \{\langle x,y\rangle -F(x)\}.
\]
\end{theorem}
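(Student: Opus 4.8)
The plan is to reproduce the classical Fenchel--Moreau argument inside the topological $L^0$-module $L^0(X)\times L^0$, equipped with the product of the topologies $\sigma(L^0(X),L^0(Y))$ and $\tau$, using conditional Hahn--Banach separation in place of the usual one. Write $F^{\ast\ast}(x):=\sup_{y\in L^0(Y)}\{\langle x,y\rangle-F^\ast(y)\}$. One inequality is immediate: the definition of $F^\ast$ gives the conditional Fenchel--Young inequality $\langle x,y\rangle-F^\ast(y)\leq F(x)$ for all $x\in L^0(X)$ and $y\in L^0(Y)$, and the essential supremum over $y$ yields $F^{\ast\ast}\leq F$. All the work lies in proving $F\leq F^{\ast\ast}$.

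As a preliminary I would check that $F^\ast$ is proper, so that $F^{\ast\ast}$ is a genuine biconjugate. Since $\dom(F)\neq\emptyset$, fix $x_1\in\dom(F)$; then $F^\ast(y)\geq\langle x_1,y\rangle-F(x_1)>-\infty$ for every $y$, so $F^\ast>-\infty$ everywhere. For finiteness at one point, observe that $(x_1,F(x_1)-1)\notin\epi(F)$ and that $\epi(F)$ is stable, $L^0$-convex and closed in the product topology; conditional separation then yields $y\in L^0(Y)$, $\lambda\in L^0$ and $c\in L^0$ with $\langle x,y\rangle+\lambda t\geq c$ on $\epi(F)$ and $\langle x_1,y\rangle+\lambda(F(x_1)-1)\leq c-\eps$ for some $\eps\in L^0_{++}$. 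Evaluating the first inequality at $(x_1,F(x_1)+s)$ with $s\in L^0_+$ arbitrarily large forces $\lambda\geq 0$, and comparing the two inequalities at the common first coordinate $x_1$ rules out the set $\{\lambda=0\}$; hence $\lambda\in L^0_{++}$, and division by $\lambda$ gives $F^\ast(-y/\lambda)\leq -c/\lambda\in L^0$. Thus $F^\ast$ is proper, $F^{\ast\ast}>-\infty$, and there exists $\bar y\in L^0(Y)$ with $F^\ast(\bar y)\in L^0$.

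For the remaining inequality, fix $x_0\in L^0(X)$; if $F^{\ast\ast}(x_0)=F(x_0)$ fails then, since $F^{\ast\ast}\leq F$, we have $F^{\ast\ast}(x_0)<F(x_0)$ on some $A\in\mathcal F$ with $\mu(A)>0$. Arguing on $A$ (which is legitimate by stability) we may pick $r\in L^0$ with $F^{\ast\ast}(x_0)<r<F(x_0)$ on $A$, so that $(x_0,r)\notin\epi(F)$. Conditional separation produces $(y,\lambda)\in L^0(Y)\times L^0$ and $c\in L^0$ with $\langle x,y\rangle+\lambda t\geq c$ on $\epi(F)$ and $\langle x_0,y\rangle+\lambda r\leq c-\eps$ for some $\eps\in L^0_{++}$; as before $\lambda\geq 0$. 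On $B:=\{\lambda>0\}$, testing the first inequality at $(x,F(x))$ for $x\in\dom(F)$ and dividing by $\lambda$ gives $F^\ast(-y/\lambda)\leq -c/\lambda$ there, whence $F^{\ast\ast}(x_0)\geq\langle x_0,-y/\lambda\rangle+c/\lambda>r$ on $B$. On $B^c=\{\lambda=0\}$ the functional $\langle\cdot,y\rangle$ alone separates $x_0$ from $\dom(F)$, i.e.\ $\langle x-x_0,y\rangle\geq\eps$ on $\dom(F)$; then, using also $F(x)-\langle x,\bar y\rangle\geq-F^\ast(\bar y)$,
\[
F^{\ast\ast}(x_0)\geq\langle x_0,\bar y-ny\rangle-F^\ast(\bar y-ny)\geq\langle x_0,\bar y\rangle-F^\ast(\bar y)+n\eps\qquad\text{for all }n\in\N,
\]
so $F^{\ast\ast}(x_0)=+\infty$ on $B^c$. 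In both cases the conclusion on $B\cup B^c=A$ contradicts $F^{\ast\ast}(x_0)<r$, so no such $A$ exists and $F=F^{\ast\ast}$.

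The step I expect to be the main obstacle is the separation itself: it requires the conditional Hahn--Banach separation theorem for a point and a stable, closed, $L^0$-convex subset of the topological $L^0$-module $L^0(X)\times L^0$, together with the identification of the conditional topological dual of $(L^0(X),\sigma(L^0(X),L^0(Y)))$ with $L^0(Y)$ acting through $\langle\cdot,\cdot\rangle$; both are provided by conditional functional analysis \cite{FKV,JAZ} and by the conditional dual-pair structure recorded in the preliminaries. The rest is bookkeeping: justifying the passage to $A$ and the recombination of the conclusions on $B$ and $B^c$ via stability, fixing the convention on the vertical component $\lambda$, and splitting along $\{\lambda>0\}$ and $\{\lambda=0\}$ precisely so that the normalized functional $-y/\lambda$ stays in $L^0(Y)$.
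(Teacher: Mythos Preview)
The paper does not prove Theorem \ref{t:FM}; it is quoted from \cite{FKV} as an established result in conditional functional analysis. Your proposal supplies a self-contained proof by transplanting the classical Fenchel--Moreau argument into the topological $L^0$-module $L^0(X)\times L^0$, invoking conditional Hahn--Banach separation and the identification of the conditional dual of $(L^0(X),\sigma(L^0(X),L^0(Y)))$ with $L^0(Y)$. This is precisely the route taken in \cite{FKV}, so you are not offering a different approach so much as unpacking the cited reference.

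The argument you give is correct. The properness of $F^\ast$ is handled cleanly (separating $(x_1,F(x_1)-1)$ from $\epi(F)$ forces $\lambda\geq\eps$ everywhere), and the main step splits correctly along $\{\lambda>0\}$ and $\{\lambda=0\}$. One small point worth making explicit: when you ``argue on $A$'' you are implicitly restricting to the trace measure algebra on $A$, so that the point $(x_0,r)1_A$ is conditionally disjoint from $\epi(F)1_A$ (i.e., there is no $A'\subseteq A$ of positive measure with $(x_0,r)1_{A'}\in\epi(F)1_{A'}$); this is what the conditional separation theorem in \cite{FKV,DJKK} requires as input, and it holds because $r<F(x_0)$ on all of $A$. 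Your final paragraph already flags this, and the references you cite do supply the needed tools.
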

In \cite{DJK}, the conditional version of the Fenchel-Moreau theorem was used to establish a Fenchel-Moreau result for functions $f\colon X\to \bar{L}^0$ as follows.
\begin{theorem}\label{t:FM2}
Let $f\colon X\to \bar{L}^0$ be proper convex and s-lower semi-continuous.
Then
\[
f(x)=\sup_{y\in L^0(Y)}\{\langle x,y\rangle -f^\ast(y)\},
\]
where the \textit{conjugate} $f^\ast\colon L^0(Y)\to \bar{L}^0$ is defined by
\[
f^\ast(y)=\sup_{x\in X} \{\langle x,y\rangle - f(x)\}.
\]
\end{theorem}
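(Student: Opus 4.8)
The plan is to deduce the statement directly from Theorems \ref{extension} and \ref{t:FM} by passing to the extension $F\colon L^0(X)\to\bar L^0$ and comparing the two notions of conjugate. First I would invoke Theorem \ref{extension}: since $f$ is proper convex and s-lower semi-continuous, there is a stable, proper, $L^0$-convex and $\sigma(L^0(X),L^0(Y))$-lower semi-continuous function $F$ with $F|_X=f$. All hypotheses of Theorem \ref{t:FM} are then met, so for every $x\in L^0(X)$,
\[
F(x)=\sup_{y\in L^0(Y)}\{\langle x,y\rangle-F^\ast(y)\},\qquad F^\ast(y)=\sup_{x\in L^0(X)}\{\langle x,y\rangle-F(x)\}.
\]

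The second step is the comparison $F^\ast\geq f^\ast$ on $L^0(Y)$. This is immediate: $X$, embedded via $x\mapsto x1_\Omega$, is a subset of $L^0(X)$, and $F$ restricts to $f$ there, so the supremum defining $F^\ast(y)$ is taken over a larger set than the one defining $f^\ast(y)$, with matching summands on $X$. (One could even prove equality $F^\ast=f^\ast$ using stability of $F$ together with the density of $L^0_s(X)$ in $L^0(X)$ and lower semi-continuity, but this is not needed.) Consequently, for $x\in X$,
\[
f(x)=F(x)=\sup_{y\in L^0(Y)}\{\langle x,y\rangle-F^\ast(y)\}\leq\sup_{y\in L^0(Y)}\{\langle x,y\rangle-f^\ast(y)\},
\]
the last inequality using $-F^\ast\leq-f^\ast$.

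For the reverse inequality I would use the Fenchel--Young inequality for $f$: by definition of $f^\ast$ one has $f^\ast(y)\geq\langle x,y\rangle-f(x)$ for every $x\in X$ and $y\in L^0(Y)$, hence $\langle x,y\rangle-f^\ast(y)\leq f(x)$ for all such $x,y$; taking the supremum over $y\in L^0(Y)$ yields $\sup_{y\in L^0(Y)}\{\langle x,y\rangle-f^\ast(y)\}\leq f(x)$. Here one keeps track of the arithmetic conventions for $\pm\infty$ in $\bar L^0$ and uses that $f$ proper forces $f^\ast(y)>-\infty$ (evaluate at a point of $\dom f$), so the pointwise a.e.\ manipulations are licit. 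Combining the two inequalities gives the asserted identity.

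I do not expect a genuine obstacle at this level, since the substance is carried by Theorems \ref{extension} and \ref{t:FM}. The only points requiring care are to verify that restricting the supremum in $F^\ast$ from $L^0(X)$ to $X$ reproduces exactly $f^\ast$ (so that $F^\ast\geq f^\ast$, and not some weaker bound, is what one obtains) and to handle the extended-real arithmetic when $f(x)$, $f^\ast(y)$, or $\langle x,y\rangle$ take infinite values on parts of $\Omega$.
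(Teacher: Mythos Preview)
Your proposal is correct. The paper does not supply its own proof of Theorem~\ref{t:FM2}; it is quoted from \cite{DJK} with the remark that ``the conditional version of the Fenchel--Moreau theorem was used to establish'' it. Your argument---extend $f$ to $F$ via Theorem~\ref{extension}, apply Theorem~\ref{t:FM} to $F$, then sandwich $f(x)$ between $\sup_y\{\langle x,y\rangle-F^\ast(y)\}$ and $\sup_y\{\langle x,y\rangle-f^\ast(y)\}$ using $F^\ast\geq f^\ast$ and the Young--Fenchel inequality---is exactly the natural route indicated by that remark, and all steps are sound.
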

\begin{remark}\label{YF}
For any functions $F\colon L^0(X)\to \bar{L}^0$ and $f\colon X\to \bar{L}^0$, the conditional conjugate $F^\ast$ and the conjugate $f^\ast$ (as defined above) are stable, $L^0$-convex and $\sigma(L^0(X),L^0(Y))$-lower semi-continuous. Moreover, the following \emph{Young-Fenchel type inequalities} hold in this framework:
\begin{align*}
F^\ast(y) + F(x) &\geq \langle x,y\rangle\ \forall x \in L^0(X)\, \forall y\in L^0(Y),\\
f^\ast(y) + f(x) &\geq \langle x,y\rangle\ \forall x \in X\, \forall y\in L^0(Y).
\end{align*}
\end{remark}
\begin{example}
Given the proper convex functions $f,g\colon X\to \bar{L}^0$, one has
\[
(f\square g)^\ast (y) = \sup_{x\in X} \{\langle x, y\rangle - \inf_{p\in X}\{f(p) + g(x-p)\}\} =
\sup_{p\in X} \{\langle p, y\rangle - f(p)\} + \sup_{u\in X} \{\langle u, y\rangle - g(u)\} = f^\ast (y) + g^\ast (y)
\]
for all $y\in L^0(Y)$. Similarly, for proper $L^0$-convex functions $F,G\colon L^0(X)\to \bar{L}^0$ it holds $(F\square G)^\ast = F^\ast + G^\ast$.
\end{example}
\begin{lemma}\label{l:lsc}
Let $F\colon L^0(X)\to \bar{L}^0$ be a stable function. Then we have $F^\ast=\cl (F)^\ast$.
\end{lemma}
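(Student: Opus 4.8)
The plan is to squeeze $\cl(F)^\ast$ between $F^\ast$ and $F^{\ast\ast\ast}$ and then to use the standard identity $F^{\ast\ast\ast}=F^\ast$. The only inputs are the Young--Fenchel inequality of Remark \ref{YF} and the fact, recorded in Remark \ref{r:lsc}, that $\cl(F)$ is the largest stable $\sigma(L^0(X),L^0(Y))$-lower semi-continuous function bounded from above by $F$. Throughout I would use that conjugation is order-reversing: if $G\leq H$, then $\langle x,y\rangle-G(x)\geq\langle x,y\rangle-H(x)$ for all $x\in L^0(X)$ and $y\in L^0(Y)$, and taking the essential supremum over $x$ gives $G^\ast\geq H^\ast$. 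In particular, since $\cl(F)\leq F$, the inequality $\cl(F)^\ast\geq F^\ast$ is immediate.

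For the reverse inequality the key step is $F^{\ast\ast}\leq\cl(F)$. By Remark \ref{YF} the conjugate of any function is stable, $L^0$-convex and $\sigma(L^0(X),L^0(Y))$-lower semi-continuous; applying this to $F^\ast$ shows that $F^{\ast\ast}=(F^\ast)^\ast$ shares these properties. On the other hand, the Young--Fenchel inequality $F^\ast(y)+F(x)\geq\langle x,y\rangle$ rearranges to $F(x)\geq\langle x,y\rangle-F^\ast(y)$ for every $y\in L^0(Y)$, whence $F(x)\geq\sup_{y\in L^0(Y)}\{\langle x,y\rangle-F^\ast(y)\}=F^{\ast\ast}(x)$. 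Thus $F^{\ast\ast}$ is a stable $\sigma(L^0(X),L^0(Y))$-lower semi-continuous function bounded from above by $F$, and Remark \ref{r:lsc} forces $F^{\ast\ast}\leq\cl(F)$. This gives the chain $F^{\ast\ast}\leq\cl(F)\leq F$, and applying order-reversing conjugation to it yields $F^{\ast\ast\ast}\geq\cl(F)^\ast\geq F^\ast$.

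It remains to observe that $F^{\ast\ast\ast}=F^\ast$: conjugating $F^{\ast\ast}\leq F$ gives $F^{\ast\ast\ast}\geq F^\ast$, while running the Young--Fenchel argument above with $F^\ast$ in place of $F$ gives $(F^\ast)^{\ast\ast}\leq F^\ast$, that is $F^{\ast\ast\ast}\leq F^\ast$. Plugging $F^{\ast\ast\ast}=F^\ast$ back into the chain of the previous paragraph sandwiches $\cl(F)^\ast$ between $F^\ast$ and $F^\ast$, so $F^\ast=\cl(F)^\ast$. The only point I expect to need genuine care --- and the main potential obstacle --- is checking that $F^{\ast\ast}$ truly qualifies as an admissible competitor in Remark \ref{r:lsc}, i.e.\ that it is stable and $\sigma(L^0(X),L^0(Y))$-lower semi-continuous in the precise senses fixed in Section \ref{se2}; this is exactly what Remark \ref{YF} provides, so no separate net- or epigraph-level computation is required.
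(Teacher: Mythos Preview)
Your proof is correct and takes a genuinely different route from the paper's argument. The paper fixes $y\in L^0(Y)$ and partitions $\Omega$ into the three sets $\{F^\ast(y)=+\infty\}$, $\{F^\ast(y)=-\infty\}$, $\{F^\ast(y)\in L^0\}$, handling each piece separately; on the finite piece it uses the single affine function $x\mapsto\langle x,y\rangle-F^\ast(y)$ as an lsc minorant of $F$ to force $\cl(F)^\ast(y)\leq F^\ast(y)$ there. You instead work globally with the biconjugate: you package \emph{all} affine minorants into $F^{\ast\ast}$, invoke Remark~\ref{YF} to see that $F^{\ast\ast}$ is an admissible competitor in Remark~\ref{r:lsc}, and then close with the standard identity $F^{\ast\ast\ast}=F^\ast$. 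Your approach is cleaner and avoids the $\omega$-wise case split (in particular the separate treatment of the $\pm\infty$ loci), at the cost of introducing the triple conjugate; the paper's approach stays at the level of a single $y$ and a single affine minorant, which keeps the argument self-contained without appealing to $F^{\ast\ast\ast}=F^\ast$. Both rely on the same two ingredients you identified, Remarks~\ref{YF} and~\ref{r:lsc}.
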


\begin{proof}
By definition, it follows from $\cl (F) \leq F$ that $F^\ast\leq \cl (F)^\ast$.
In order to prove the opposite inequality, fix $y\in L^0(Y)$ and let
\[
A=\{F^\ast(y)=+\infty\}, B=\{F^\ast(y)=-\infty\}, C=\{-\infty <F^\ast(y)<+\infty\},
\]
and observe that $(A,B,C)$ is a partition of $\Omega$.
Since $F^\ast\leq \cl (F)^\ast$ we have $A=\{\cl(F)^\ast (y)= +\infty\}$, and thus $F^\ast=\cl (F)^\ast$ on $A$.
By the Young-Fenchel type inequality (see Remark \ref{YF}), it holds that $B=\{F(x)=+\infty\}$ for all $x\in L^0(X)$.
Therefore by the local property, we have that $\cl (F)(x) = +\infty$ on $B$ which implies that $\cl (F)^\ast (y)=-\infty$ on $B$ as well.
It remains to show that  $\cl (F)^\ast \leq F^\ast$ holds on $C$.
By the conditional Young-Fenchel inequality, it holds that $\langle x, y\rangle - F^\ast (y) \leq F(x)$ on $C$ for all $x\in L^0(X)$.
Since the function $x\mapsto \langle x, y\rangle - F^\ast (y)$ is a stable $\sigma(L^0(X),L^0(Y))$-lower semi-continuous minorant of $F$ on $C$, it is less than or equal to $\cl (F)$ on $C$. Consequently, $\langle x, y\rangle - \cl(F)(x) \leq F^\ast (y)$ on $C$ for all $x\in L^0(X)$.
Taking the supremum over $x\in L^0(X)$, one obtains
$\cl(F)^\ast (y)\leq F^\ast (y)$ on $C$ which completes this proof.
\end{proof}
We define a subdifferential notion for functions $f\colon X \to \bar{L}^0$.
\begin{definition}\label{d:subdiff}
Let $f\colon X\to \bar{L}^0$ be a function. A dual element $y\in L^0(Y)$ is said to be a \textit{subgradient} of $f$ at $x\in X$ whenever $f(x)\in L^0$ and $f(p)-f(x)\geq \langle p-x,y\rangle$ for all $p\in X$. The set of subgradients of $f$ at $x$ is denoted by $\partial f(x)$ and is said to be the
\textit{subdifferential} of $f$ at $x$. In the case that $f(x)$ is not finite, we take by convention $\partial f(x)=\emptyset$.
\end{definition}
\begin{remark}\label{r:subdiff}
From Definition \ref{d:subdiff} one can derive that, given $y\in L^0(Y)$ and $x\in X$ such that $f(x)\in L^0$, one has $y\in \partial f(x)$ if and only if
$\langle x,y\rangle-f(x)\geq \langle p,y\rangle - f(p)$ for all $p\in X$, that is further equivalent to
$\langle x,y\rangle-f(x)\geq \sup_{p\in X} \{\langle p,y\rangle - f(p)\} = f^\ast(y)$, i.e. $f^\ast(y) + f(x) \leq \langle x,y\rangle$. Taking into consideration the Young-Fenchel type inequality given in Remark \ref{YF}, one concludes that $y\in \partial f(x)$ if and only if $f^\ast(y) + f(x) = \langle x,y\rangle$.
\end{remark}
By $X^*$, we denote the topological dual space of $X$. 
Let $A:X\to W$ be a bounded operator with adjoint $A^\ast\colon W^\ast\to X^\ast$. 
As $A^\ast$ and $A$ are uniformly continuous, by \cite[Proposition 2.8]{DJK18}, there are unique extensions $A\colon L^0(X)\to L^0(W)$ and $A^*:L^0(W^*)\to L^0(X^*)$ such that $\langle A^*(w^*), x\rangle = \langle Ax, w^*\rangle$ for all $x\in L^0(X)$ and $w^*\in L^0(W^*)$. 
In particular, $\langle A^*(w^*), x\rangle = \langle Ax, w^*\rangle$ for all $x\in X$ and $w^*\in L^0(W^*)$. 
A subset $U\subseteq X$ is said to be \textit{s-closed} if $L^0_s(U)$ is closed w.r.t.~the relative $\sigma(L^0(X),L^0(Y))$-topology. 
The (convex) \textit{indicator function} $\delta_U:X\to \bar{L}^0$ of a set $U\subseteq X$ is defined by
\[
\delta_U(x)=
\begin{cases}
0,  &x\in U,\\
\infty 1_\Omega, &x\not\in U. 
\end{cases}
\]
It can be checked that $U$ is non-empty, convex and s-closed if and only if $\delta_U$ is proper convex and s-lower semi-continuous. 
A set $C\subseteq W$ is a \emph{convex cone} if $rC\subseteq C$ for all $r\geq 0$ and $C+C\subseteq C$. 
A convex cone induces a partial ordering ``$\leqq_C$'' on $W$ by $w_1\leqq_C w_2$ whenever $w_2-w_1\in C$. 
By $C^*:=\{z\in Z: \langle w, z\rangle \geq 0 \; \forall w\in C\}$, we denote the \textit{dual cone} of $C$.
Given a convex cone $C\subseteq W$, a function $h : X \to W$ is said to be \textit{$C$-convex} if $r h(x_1)+(1-r)h(x_2) - h(r x_1+(1-r) x_2)\in C$ holds for all $x_1,x_2 \in X$ and all $r \in (0,1)$, and \textit{$C$-$\epi$-closed} whenever its \textit{$C$-epigraph} $\{(x,w)\in X\times W: h(x)-w\in -C\}$ is s-closed.

\section{Main results}\label{se3}

In this section, we develop a duality approach for vector optimization problems consisting in minimizing vector-valued functions defined on Banach spaces and taking values in $\bar{L}^0$ which is inspired by the perturbational approach for conjugate duality established for scalar optimization problems by Rockafellar \cite{ROC}. 
The first result provides a \textit{Moreau-Rockafellar type statement}, see \cite{BOT} for the scalar counterpart.  

\begin{lemma}\label{MR}
Let $\Phi\colon X\times W\to \bar{L}^0$ be a proper convex and s-lower semi-continuous function such that $0\in {\pr}_W({\dom}(\Phi))$.
For all  $y\in L^0(Y)$, it holds that
\begin{equation}\label{eq0}
(\Phi(\cdot,0))^\ast(y)=\sup_{x\in X} \{\langle x,y\rangle - \Phi(x,0)\}={\cl}(\inf_{z\in L^0(Z)} \Phi^\ast(\cdot,z))(y).
\end{equation}
\end{lemma}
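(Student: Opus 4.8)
The first equality in \eqref{eq0} is merely the definition of the conjugate, so the substance is the second equality. The plan is to set $\varphi:=\Phi(\cdot,0)\colon X\to\bar L^0$ and $\psi:=\inf_{z\in L^0(Z)}\Phi^\ast(\cdot,z)\colon L^0(Y)\to\bar L^0$, and to prove $\varphi^\ast\le\cl(\psi)$ (the easy inequality) together with $\cl(\psi)\le\varphi^\ast$ (the hard one). First I would record that $\varphi$ is proper, convex and s-lower semi-continuous: properness uses the hypothesis $0\in\pr_W(\dom(\Phi))$, which furnishes $x_0$ with $\Phi(x_0,0)\in L^0$, together with $\Phi>-\infty$; convexity is the restriction of that of $\Phi$ to the slice $w=0$; and s-lower semi-continuity follows because the linear embedding $X\ni x\mapsto(x,0)\in X\times W$ extends to a $\sigma$-continuous map $L^0_s(X)\to L^0_s(X\times W)$, so $F_s^{\varphi}$ is the composite of the $\sigma$-lower semi-continuous $F_s^{\Phi}$ with this embedding. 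In particular, by Theorem \ref{t:FM2}, $\varphi$ coincides with its biconjugate on $X$; moreover $\Phi^{\ast}$ is proper, stable, $L^0$-convex and $\sigma$-lower semi-continuous (Remark \ref{YF}), $\Phi=\Phi^{\ast\ast}$ on $X\times W$ (Theorem \ref{t:FM2}) where $\Phi^{\ast\ast}:=(\Phi^\ast)^\ast\colon L^0(X)\times L^0(W)\to\bar L^0$ is again proper, stable, $L^0$-convex and $\sigma$-lower semi-continuous, and $\psi$ is $L^0$-convex and stable as an infimal projection of a stable $L^0$-convex function, and proper (since $\Phi^\ast>-\infty$ and, by Theorem \ref{t:FM2}, $\Phi^\ast\not\equiv+\infty$).

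For the easy inequality, a weak duality estimate gives, for each $y\in L^0(Y)$,
\[
\psi(y)=\inf_{z}\sup_{(x,w)}\bigl\{\langle x,y\rangle+\langle w,z\rangle-\Phi(x,w)\bigr\}\ \ge\ \sup_{(x,w)}\inf_{z}\bigl\{\langle x,y\rangle+\langle w,z\rangle-\Phi(x,w)\bigr\}=\sup_{x\in X}\bigl\{\langle x,y\rangle-\Phi(x,0)\bigr\}=\varphi^\ast(y),
\]
since $\inf_{z\in L^0(Z)}\langle w,z\rangle$ equals $0$ for $w=0$ and $-\infty$ otherwise. As $\varphi^\ast$ is $\sigma(L^0(Y),L^0(X))$-lower semi-continuous (Remark \ref{YF}) and bounded above by $\psi$, Remark \ref{r:lsc} yields $\varphi^\ast\le\cl(\psi)$.

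For the reverse inequality I would use Lemma \ref{l:lsc} (with the roles of the spaces interchanged), $\cl(\psi)^\ast=\psi^\ast$; since $\psi$ is proper, stable and $L^0$-convex, $\cl(\psi)$ is proper, stable, $L^0$-convex and $\sigma$-lower semi-continuous, hence $\cl(\psi)=\cl(\psi)^{\ast\ast}=\psi^{\ast\ast}$ by Theorem \ref{t:FM}. Computing, for $x\in L^0(X)$,
\[
\psi^\ast(x)=\sup_{y}\bigl\{\langle x,y\rangle-\inf_{z}\Phi^\ast(y,z)\bigr\}=\sup_{(y,z)}\bigl\{\langle x,y\rangle+\langle 0,z\rangle-\Phi^\ast(y,z)\bigr\}=\Phi^{\ast\ast}(x,0),
\]
so that $\cl(\psi)=(\Phi^{\ast\ast}(\cdot,0))^\ast$, the conjugate taken over $L^0(X)$. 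Since $\Phi^{\ast\ast}(x,0)=\Phi(x,0)=\varphi(x)$ for $x\in X$, everything comes down to showing
\[
\sup_{x\in L^0(X)}\bigl\{\langle x,y\rangle-\Phi^{\ast\ast}(x,0)\bigr\}\ \le\ \sup_{x\in X}\bigl\{\langle x,y\rangle-\Phi(x,0)\bigr\}=\varphi^\ast(y)\qquad(y\in L^0(Y)),
\]
i.e.\ that enlarging the supremum from $X$ to $L^0(X)$ does not raise it. By stability, $\Phi^{\ast\ast}(\cdot,0)$ restricted to the $\Vert\cdot\Vert$-dense subspace $L^0_s(X)$ is exactly the step extension $F_s^{\varphi}$ of $\varphi$, and the Young–Fenchel inequality shows $\sup_{x\in L^0_s(X)}\{\langle x,y\rangle-\Phi^{\ast\ast}(x,0)\}=\varphi^\ast(y)$; equivalently, the $L^0$-affine $\sigma$-continuous functional $x\mapsto\langle x,y\rangle-\varphi^\ast(y)$ is dominated by $\Phi^{\ast\ast}(\cdot,0)$ on $L^0_s(X)$, and one must propagate this to all of $L^0(X)$. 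This last step is the main obstacle: $\Phi^{\ast\ast}(\cdot,0)$ need not agree with the canonical extension of $\varphi$ at non-step points — it may be strictly smaller there, which is precisely why the closure appears on the right-hand side of \eqref{eq0} — so formal conjugacy manipulations do not suffice. I expect the propagation to be carried out through the conditional functional-analytic machinery: either by invoking the extension theory of \cite{DJK} (the extension of $\Phi$ arises as the $\sigma$-closure of its step extension, whence $\Phi^{\ast\ast}$ has the same conjugate as $\Phi$ and its slice at $0$ coincides with the extension of $\varphi$), or, equivalently, by a conditional separation/Hahn–Banach argument for the proper $L^0$-convex $\sigma$-lower semi-continuous function $\Phi^{\ast\ast}(\cdot,0)$ that extends an affine minorant of $\Phi$ on the slice $X\times\{0\}$ to one on $X\times W$ with dual component and constant arbitrarily close to the prescribed ones — here the hypothesis that norm-closed balls in $X,W,Y,Z$ are weakly closed enters. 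Combining the two inequalities gives \eqref{eq0}.
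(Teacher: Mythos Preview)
Your route is the paper's route: the identity $\psi^\ast(x)=\Phi(x,0)$ for $x\in X$ (their equation \eqref{eq1}) together with $\cl(\psi)=\psi^{\ast\ast}$ via Lemma~\ref{l:lsc} and Theorem~\ref{t:FM} is exactly how the paper argues. The difference lies in where the effort goes.

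You flag as the ``main obstacle'' the passage from $\sup_{x\in X}$ to $\sup_{x\in L^0(X)}$, i.e.\ the equality $(\psi^\ast|_X)^\ast=(\psi^\ast)^\ast$. The paper does not isolate this at all: it simply writes $(\Phi(\cdot,0))^\ast=\big(\psi^\ast\big)^\ast$ in one stroke, citing \eqref{eq1}, Lemma~\ref{l:lsc} and Theorem~\ref{t:FM}. What is being used tacitly is that for a stable $\sigma$-lower semi-continuous function on $L^0(X)$ the conditional conjugate coincides with the $X$-conjugate of its restriction --- precisely the fact on which the proof of Theorem~\ref{t:FM2} in \cite{DJK} rests. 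So you are right that something is happening there, but it is absorbed into the machinery of \cite{DJK} rather than argued out; your proposed Hahn--Banach/extension patches are heavier than what the paper actually invokes.

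Conversely, the paper spends almost its entire proof on a point you dispatch in one clause: the properness of $\cl(\psi)$, without which Theorem~\ref{t:FM} does not apply. You write ``since $\psi$ is proper, stable and $L^0$-convex, $\cl(\psi)$ is proper'', but note that properness in $\bar L^0$ is a pointwise-a.e.\ statement and is not automatic under closure. The paper argues by contradiction, supposing $\cl(\psi)$ fails to be proper, and uses a stability/gluing argument in the measure algebra to upgrade ``$\cl(\psi)(y)=\pm\infty$ on a set of positive measure for some/every $y$'' to a uniform statement; this is then shown to contradict either the properness of $\Phi$ or the feasibility hypothesis $0\in\pr_W(\dom\Phi)$. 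Your easy inequality does give an affine minorant $\psi\ge\varphi^\ast\ge\langle x_0,\cdot\rangle-\varphi(x_0)$, which handles the $-\infty$ case cleanly, but the $+\infty$ case (finding a single $y_0$ with $\cl(\psi)(y_0)<+\infty$ \emph{everywhere}) still needs the gluing argument the paper supplies.
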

\begin{proof}
The key to the proof is the identity
\begin{equation}\label{eq1}			
(\inf_{z\in L^0(Z)}\Phi^\ast(\cdot,z))^\ast(x)=\Phi(x,0), \quad x\in X,
\end{equation}
which is implied by Theorem \ref{t:FM2} as follows
\begin{align*}
(\inf_{z\in L^0(Z)} \Phi^\ast(\cdot,z))^\ast(x)&=\sup_{y\in L^0(Y)}\{\langle x,y\rangle - \inf_{z\in L^0(Z)} \Phi^\ast(y,z)\} \\
&= \sup_{y\in L^0(Y), \, z\in L^0(Z)}\{\langle x,y\rangle - \Phi^\ast(y,z)\} \\
&=  \sup_{y\in L^0(Y), \, z\in L^0(Z)}\{\langle x,y\rangle + \langle 0,z\rangle - \Phi^\ast(y,z)\}\\
&= \Phi(x,0).
\end{align*}
The first equality in \eqref{eq0} is the definition of the conjugate.
By \eqref{eq1}, Lemma \ref{l:lsc} and Theorem \ref{t:FM}, we would have
\[
(\Phi(\cdot,0))^\ast=\big((\inf_{z\in L^0(Z)} \Phi^\ast(\cdot,z))^\ast\big)^\ast={\cl}(\inf_{z\in L^0(Z)} \Phi^\ast(\cdot,z)),
\]
if ${\cl}(\inf_{z\in L^0(Z)} \Phi^\ast(\cdot,z))$ was proper.
Suppose that this is false.
Then we have either
\begin{equation}\label{eq2}
\mu(\{{\cl}(\inf_{z\in L^0(Z)} \Phi^\ast(\cdot,z))(y)=+\infty\})>0
\end{equation}
for all $y\in L^0(Y)$, or
\begin{equation}\label{eq3}
\mu(\{{\cl}(\inf_{z\in L^0(Z)} \Phi^\ast(\cdot,z))(y)=-\infty\})>0
\end{equation}
for some $y\in L^0(Y)$.
Let us deal with \eqref{eq2} first.
We prove that \eqref{eq2} implies the stronger statement
\begin{equation}\label{eq4}		
\mu(\cap_{y\in L^0(Y)}\{{\cl}(\inf_{z\in L^0(Z)} \Phi^\ast(\cdot,z))(y)=+\infty\})>0.
\end{equation}
To see this, suppose for the sake of a contradiction that \eqref{eq4} is false, i.e.~assume\footnote{Note that the possibly uncountable intersection makes sense in the associated measure algebra (see the preliminaries and the references there for more details), and the equality to the empty set is understood in the a.e. sense.}
\[
\cap_{y\in L^0(Y)}\{{\cl}(\inf_{z\in L^0(Z)} \Phi^\ast(\cdot,z))(y)=+\infty\}=\emptyset.
\]
By de Morgan's laws (in a complete Boolean algebra, see \cite{GHA} for a reference), we have thus
\[
\cup_{y\in L^0(Y)}\{{\cl}(\inf_{z\in L^0(Z)} \Phi^\ast(\cdot,z))(y)<+\infty\}=\Omega.
\]
By \cite[Chapter 30, Lemma 1]{GHA}, there exists a countable family $(y_n)$ in  $L^0(Y)$ such that
\[
\cup_n \{{\cl}(\inf_{z\in L^0(Z)} \Phi^\ast(\cdot,z))(y_n)<+\infty\}=\Omega.
\]
Put $A_n=\{{\cl}(\inf_{z\in L^0(Z)} \Phi^\ast(\cdot,z))(y_n)<+\infty\}\setminus(\cup_{m<n} \{{\cl}(\inf_{z\in L^0(Z)} \Phi^\ast(\cdot,z))(y_m)<+\infty\})$ for each $n$.
Then $(A_n)$ forms a partition of $\Omega$.
By the gluing property of the conditional envelope function,
\begin{align*}
\{{\cl}(\inf_{z\in L^0(Z)} \Phi^\ast(\cdot,z))(\sum_n y_n|A_n)<+\infty\}&=\{\sum_n {\cl}(\inf_{z\in L^0(Z)} \Phi^\ast(\cdot,z))(y_n)|A_n<+\infty\}=\Omega.
\end{align*}
But this contradicts \eqref{eq2} for $y=\sum_n y_n|A_n\in L^0(Y)$.
Hence we may continue by assuming \eqref{eq4}.
This implies that ${\cl}(\inf_{z\in L^0(Z)} \Phi^\ast(\cdot,z))^\ast(x)=-\infty$ on a set of positive measure for all $x\in L^0(X)$.
This, however, contradicts the properness of $\Phi$, since Lemma \ref{l:lsc} and \eqref{eq1} imply
\[
{\cl}(\inf_{z\in L^0(Z)}\Phi^\ast(\cdot,z))^\ast=(\inf_{z\in L^0(Z)} \Phi^\ast(\cdot,z))^\ast=\Phi(\cdot,0).
\]
Similarly, the second case \eqref{eq3} implies that $\Phi(x, 0)=+\infty$ on a set of positive measure for all $x\in X$ which contradicts the feasibility assumption $0\in {\pr}_W({\dom}(\Phi))$. This completes the proof.
\end{proof}
The following proposition will prepare for the strong duality statement.
\begin{proposition}\label{MR-epi}
Let $\Phi\colon X\times W\to \bar{L}^0$ be a proper convex and s-lower semi-continuous function such that $0\in {\pr}_W({\dom}(\Phi))$.
Then it holds that
\[
{\epi}((\Phi(\cdot,0))^\ast)={\cl}_{\sigma \times \tau}({\epi}(\inf_{z\in L^0(Z)} \Phi^\ast(\cdot,z)))={\cl}_{\sigma \times \tau}({\pr}_{L^0(Y)\times L^0}({\epi}(\Phi^\ast))).
\]
\end{proposition}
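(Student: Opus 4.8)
The plan is to derive the first equality directly from Lemma~\ref{MR} together with the fact that taking epigraphs commutes with the closure operation, and to obtain the second equality from a sandwich of three sets whose only non-routine inclusion is an $L^0$-valued version of the classical $\eps$-approximation of an infimum.

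Set $G:=\inf_{z\in L^0(Z)}\Phi^\ast(\cdot,z)\colon L^0(Y)\to\bar L^0$. First I would check that $G$ is proper and stable: picking $x_0\in X$ with $(x_0,0)\in{\dom}(\Phi)$ — which exists since $0\in{\pr}_W({\dom}(\Phi))$ — one has $\Phi^\ast(y,z)\ge\langle x_0,y\rangle-\Phi(x_0,0)$ for every $z$, so $G(y)>-\infty$ for all $y$, while $G(y_0)\le\Phi^\ast(y_0,z_0)$ is finite at a point where $\Phi^\ast$ is finite; stability of $G$ is a routine gluing computation from the stability of $\Phi^\ast$ (Remark~\ref{YF}) and the decomposability of $L^0(Z)$ (and $G$ is $L^0$-convex by the infimal-projection property recorded in Section~\ref{se2}). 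Lemma~\ref{MR} asserts precisely that $(\Phi(\cdot,0))^\ast={\cl}(G)$ as functions on $L^0(Y)$; passing to epigraphs and using ${\epi}({\cl}(G))={\cl}_{\sigma\times\tau}({\epi}(G))$ therefore gives
\[
{\epi}\big((\Phi(\cdot,0))^\ast\big)={\cl}_{\sigma\times\tau}\big({\epi}(G)\big),
\]
which is the first claimed equality.

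For the second equality I would establish the two inclusions
\[
{\pr}_{L^0(Y)\times L^0}({\epi}(\Phi^\ast))\ \subseteq\ {\epi}(G)\ \subseteq\ {\cl}_{\sigma\times\tau}\big({\pr}_{L^0(Y)\times L^0}({\epi}(\Phi^\ast))\big)
\]
and then apply ${\cl}_{\sigma\times\tau}$ throughout, using that this operation is monotone and idempotent. The left inclusion is immediate, since $\Phi^\ast(y,z)\le t$ forces $G(y)\le t$. For the right one, fix $(y,t)\in{\epi}(G)$, so that $t\in L^0$ and $a:=G(y)\le t$ (in particular $\{a=+\infty\}$ is a null set). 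The family $\{\Phi^\ast(y,z):z\in L^0(Z)\}$ is stable and downward directed — given two members, glue them along the measurable set where the first is the smaller and use stability of $\Phi^\ast$ — so one can choose a sequence $(z_n)$ with $\Phi^\ast(y,z_n)\downarrow a$. For $\eps\in L^0_{++}$ put $c_\eps:=(a+\eps)1_{\{a\in L^0\}}+(t+\eps)1_{\{a=-\infty\}}\in L^0$; then the sets $\{\Phi^\ast(y,z_n)<c_\eps\}$ increase to $\Omega$ almost everywhere, and gluing the corresponding $z_n$ along the induced measurable partition produces $z_\eps\in L^0(Z)$ with $\Phi^\ast(y,z_\eps)\le c_\eps\le t+\eps$, that is $(y,t+\eps)\in{\pr}_{L^0(Y)\times L^0}({\epi}(\Phi^\ast))$. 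Finally, the net $\big((y,t+\eps)\big)_{\eps\in L^0_{++}}$, with $L^0_{++}$ directed by reverse order, is a stable net converging to $(y,t)$ in $\sigma\times\tau$: it is constant in the first coordinate, and $t+\eps\to t$ in the conditional Euclidean topology since for every $s\in L^0_{++}$ one has $\eps<s$ as soon as $\eps\le s/2$. Hence $(y,t)$ lies in the right-hand closure, and the sandwich follows.

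The step I expect to be the main obstacle is the middle inclusion ${\epi}(G)\subseteq{\cl}_{\sigma\times\tau}({\pr}_{L^0(Y)\times L^0}({\epi}(\Phi^\ast)))$: this is where the scalar ``pick an $\eps$-minimizer'' must be upgraded to a monotone sequence realizing the essential infimum together with a gluing along a measurable exhaustion, and where two points need genuine care — the behaviour on the set $\{G(y)=-\infty\}$, which forces the piecewise definition of $c_\eps$, and the fact that in the conditional Euclidean topology convergence has to be tested against every $s\in L^0_{++}$, so the approximating family must be indexed by $L^0_{++}$ rather than by $1/n$. The remaining ingredients — properness and stability of $G$, stability of the epigraphs involved and of their projections, and the identity ${\epi}({\cl}(G))={\cl}_{\sigma\times\tau}({\epi}(G))$ — are routine consequences of the calculus assembled in Section~\ref{se2}.
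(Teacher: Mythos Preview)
Your proof is correct and follows essentially the same route as the paper: the first equality via Lemma~\ref{MR} and $\epi(\cl(G))=\cl(\epi(G))$, and the second via the sandwich ${\pr}_{L^0(Y)\times L^0}(\epi(\Phi^\ast))\subseteq\epi(G)\subseteq\cl_{\sigma\times\tau}({\pr}_{L^0(Y)\times L^0}(\epi(\Phi^\ast)))$, where the non-trivial inclusion is obtained by producing, for each $\eps\in L^0_{++}$, a point $(y,t+\eps)$ in the projection. Your version is simply more explicit about the gluing that yields the $\eps$-minimizer and about convergence in the conditional topology (points the paper states tersely); note, incidentally, that your properness argument already gives $G(y)>-\infty$ everywhere, so the separate treatment of $\{a=-\infty\}$ in the definition of $c_\eps$ is unnecessary.
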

\begin{proof}
The identity on the l.h.s.~ is a consequence of Lemma \ref{MR} and the intertwining relations between epigraphs and closures.
As for the identity on the r.h.s., let  $(y,t)\in {\pr}_{L^0(Y)\times L^0}({\epi}(\Phi^\ast))$.
Then there is $z\in L^0(Z)$ such that $\Phi^\ast(y,z)\leq t$. Thus $\inf_{z\in L^0(Z)} \Phi^\ast(y,z)\leq t$ which implies $(y,t)\in {\epi}(\inf_{z\in L^0(Z)}(\Phi^\ast(\cdot,z)))$.
On the other hand, if $(y,t)\in {\epi}(\inf_{z\in L^0(Z)}(\Phi^\ast(\cdot,z)))$, then for all $\eps\in L^0_{++}$ there is $z\in L^0(Z)$ such that $\Phi^\ast(y,z)\leq t+\eps$.
We have
\[
(y,t+\eps)\in \cup_{z\in L^0(Z)} \epi \Phi^\ast (\cdot, z)={\pr}_{L^0(Y)\times L^0}({\epi}(\Phi^\ast)).
\]
As $\eps\in L^0_{++}$ was arbitrary, one obtains
\[
(y,t)\in {\cl}_{\sigma \times \tau}({\pr}_{L^0(Z)\times L^0}({\epi}(\Phi^\ast))),
\]
which yields the r.h.s.~identity.
\end{proof}
Using these two statements one can prove the following result, whose interpretation in terms of duality will be presented later.
\begin{theorem}\label{stable-d}
Let $\Phi\colon X\times W\to \bar{L}^0$ be a proper convex and s-lower semi-continuous function such that $0\in {\pr}_W({\dom}(\Phi))$.
Then ${\pr}_{L^0(Y)\times L^0}({\epi}(\Phi^\ast))$ is $\sigma(L^0(Y),L^0(X))\times \tau$-closed if and only if
\begin{equation}\label{eq7}
(\Phi(\cdot,0))^\ast (y)=\sup_{x\in X}\{\langle x,y\rangle - \Phi(x,0)\}=\min_{z\in L^0(Z)} \Phi^\ast(y,z)
\end{equation}
for all $y\in L^0(Y)$.
\end{theorem}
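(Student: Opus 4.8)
The plan is to deduce everything from Proposition~\ref{MR-epi} and then to decode a single set equality. Write $P:={\pr}_{L^0(Y)\times L^0}({\epi}(\Phi^\ast))$ and $h:=\inf_{z\in L^0(Z)}\Phi^\ast(\cdot,z)$. By Proposition~\ref{MR-epi} the $\sigma(L^0(Y),L^0(X))\times\tau$-closure of $P$ equals ${\epi}((\Phi(\cdot,0))^\ast)$, so $P$ is $\sigma(L^0(Y),L^0(X))\times\tau$-closed if and only if $P={\epi}((\Phi(\cdot,0))^\ast)$. The whole theorem thus reduces to showing that this set equality is equivalent to \eqref{eq7}.

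First I would record the chain of inclusions $P\subseteq{\epi}(h)\subseteq{\epi}((\Phi(\cdot,0))^\ast)$: the left inclusion is immediate from the definitions of the projection and of the pointwise essential infimum, and the right one follows from Lemma~\ref{MR}, which gives $(\Phi(\cdot,0))^\ast={\cl}(h)\le h$. From this chain, $P={\epi}((\Phi(\cdot,0))^\ast)$ holds if and only if both ${\epi}(h)={\epi}((\Phi(\cdot,0))^\ast)$ and $P={\epi}(h)$, and it remains to translate these two equalities.

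For the first equality: $h$ is stable (being the pointwise essential infimum over $z\in L^0(Z)$ of the stable function $\Phi^\ast$) and proper (it satisfies $h\ge{\cl}(h)=(\Phi(\cdot,0))^\ast$, which is proper as shown inside the proof of Lemma~\ref{MR}, and $h$ is finite-valued somewhere since $\Phi^\ast$ is proper), and $(\Phi(\cdot,0))^\ast$ is stable and proper as well; since a stable proper function equals the stable lower bound function of its own epigraph, ${\epi}(h)={\epi}((\Phi(\cdot,0))^\ast)$ is equivalent to $h=(\Phi(\cdot,0))^\ast$, which is exactly the first displayed equality in \eqref{eq7} (the middle term there being merely the definition of $(\Phi(\cdot,0))^\ast(y)$). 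For the second equality, assume $h=(\Phi(\cdot,0))^\ast$. If $P={\epi}(h)$, then for any $y$ with $(\Phi(\cdot,0))^\ast(y)\in L^0$ the pair $(y,(\Phi(\cdot,0))^\ast(y))$ lies in $P$, so there is $z^\ast\in L^0(Z)$ with $\Phi^\ast(y,z^\ast)\le(\Phi(\cdot,0))^\ast(y)=\inf_{z\in L^0(Z)}\Phi^\ast(y,z)$, forcing $\Phi^\ast(y,z^\ast)=(\Phi(\cdot,0))^\ast(y)$; on the event $\{(\Phi(\cdot,0))^\ast(y)=+\infty\}$ every $z$ attains the infimum (there $\Phi^\ast(y,z)\ge\inf_{z'}\Phi^\ast(y,z')=+\infty$ for all $z$), so by restricting to the complementary event and gluing one obtains a single minimizer for arbitrary $y\in L^0(Y)$, i.e.\ the second equality in \eqref{eq7}. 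Conversely, if that minimum is attained for every $y$, then ${\epi}(h)\subseteq P$, and since $P\subseteq{\epi}(h)$ always, $P={\epi}(h)$. Combining the two translations identifies $P={\epi}((\Phi(\cdot,0))^\ast)$ with \eqref{eq7}, which finishes the argument.

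The step I expect to be the main obstacle is the bookkeeping with the values $\pm\infty$, in particular passing from attainment of the infimum on the event $\{(\Phi(\cdot,0))^\ast(y)<+\infty\}$ to the global minimizer required in \eqref{eq7}; this uses properness of $(\Phi(\cdot,0))^\ast={\cl}(h)$, the locality of the conjugates $\Phi^\ast$ and $(\Phi(\cdot,0))^\ast$, and a gluing argument. By contrast, all the topological content is light once Proposition~\ref{MR-epi} is in hand.
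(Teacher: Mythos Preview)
Your argument is correct and follows essentially the same route as the paper: both reduce the equivalence, via Proposition~\ref{MR-epi}, to the set identity ${\pr}_{L^0(Y)\times L^0}({\epi}(\Phi^\ast))={\epi}((\Phi(\cdot,0))^\ast)$ and then read off \eqref{eq7} from the resulting equalities of epigraphs. The paper is considerably more terse---it simply asserts ``in particular, the infimum in the r.h.s.\ is attained''---whereas you correctly flag and handle the $\pm\infty$ bookkeeping (properness of $(\Phi(\cdot,0))^\ast$, locality, and gluing on $\{(\Phi(\cdot,0))^\ast(y)=+\infty\}$) that this step actually requires.
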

\begin{proof}
Let ${\pr}_{L^0(Y)\times L^0}({\epi}(\Phi^\ast))$ be $\sigma(L^0(Y),L^0(X))\times \tau$-closed.
By Proposition \ref{MR-epi}, we have
\[
{\epi}((\Phi(\cdot,0))^\ast)={\epi}(\inf_{z\in L^0(Z)} \Phi^\ast(\cdot,z))={\pr}_{L^0(Y)\times L^0}({\epi}(\Phi^\ast)),
\]
which implies via Lemma \ref{MR}
\[
(\Phi(\cdot,0))^\ast=\sup_{x\in X} \{\langle x,\cdot\rangle - \Phi(x,0)\}=\inf_{z\in L^0(Z)} \Phi^\ast(\cdot,z).
\]
In particular, the infimum in the r.h.s. is attained, that is \eqref{eq7} holds.

Conversely, \eqref{eq7} yields
\[
{\epi}((\Phi(\cdot,0))^\ast)={\pr}_{L^0(Y)\times L^0}({\epi}(\Phi^\ast)).
\]
Proposition \ref{MR-epi} then implies that ${\pr}_{L^0(Y)\times L^0}({\epi}(\Phi^\ast))$ is $\sigma(L^0(Y),L^0(X))\times \tau$-closed.
\end{proof}
We introduce a perturbational vector duality approach for vector optimization problems for $\bar{L}^0$-valued functions defined on $X$.
Let $f: X\to \bar{L}^0$ be proper convex and s-lower semi-continuous. Consider the general vector optimization problem, further referred to as the \emph{primal problem}\\

\noindent $(PG)$\hfill$\inf\limits_{x \in X} f(x)$.\hfill$\:$\\

By $v(PG)$ we denote the optimal objective value of $(PG)$. In order to assign a dual problem to $(PG)$, consider a proper convex and s-lower semi-continuous \textit{perturbation function} $\Phi\colon X\times W\to \bar{L}^0$ fulfilling $\Phi(x,0) = f(x)$ for all $x \in X$. We call $W$ the {\it perturbation space} and its elements {\it perturbation variables}. The problem $(PG)$ can be then rewritten as\\

\noindent $(PG)$\hfill$\inf\limits_{x \in X} \Phi(x,0)$.\hfill$\:$\\

To $(PG)$ we attach the following \textit{conjugate dual problem}\\

\noindent $(DG)$\hfill$\sup\limits_{z \in L^0(Z)} \{-\Phi^\ast(0, z)\}$.\hfill$\:$\\

\noindent For this primal-dual pair of vector optimization problems one can derive directly from the construction the following \textit{weak duality} statement. Note that for the construction of the dual problem and for deriving weak duality the function $\Phi$ needs not be s-lower semi-continuous.

\begin{proposition}\label{weak-d}
It holds $v(DG)\leq v(PG)$, where $v(DG)$ denotes the optimal objective value of the conjugate dual problem.
\end{proposition}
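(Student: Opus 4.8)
The plan is to obtain weak duality as a purely formal consequence of the Young-Fenchel type inequality in Remark \ref{YF}, mimicking the scalar perturbational scheme; the only extra care needed is that all objects are $\bar L^0$-valued, so that ``$\inf$'' and ``$\sup$'' mean essential infimum and supremum and every (in)equality is understood a.e. In particular, no convexity, $L^0$-convexity, or s-lower semicontinuity of $\Phi$ is used, as already signalled in the paragraph preceding the statement.

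First I would record that $\Phi^\ast$ in $(DG)$ is the conjugate of $\Phi\colon X\times W\to\bar L^0$ with respect to the product dual pair $(X\times W,\,Y\times Z,\,\langle\cdot,\cdot\rangle)$, where $\langle (x,w),(y,z)\rangle:=\langle x,y\rangle+\langle w,z\rangle$, so that Remark \ref{YF} applies to $\Phi$ and gives
\[
\Phi^\ast(y,z)+\Phi(x,w)\;\geq\;\langle x,y\rangle+\langle w,z\rangle
\]
for all $(x,w)\in X\times W$ and all $(y,z)\in L^0(Y)\times L^0(Z)$. Specializing to $w=0$ and $y=0$, and using $\Phi(x,0)=f(x)$, this becomes $\Phi^\ast(0,z)+f(x)\geq 0$, i.e.
\[
-\Phi^\ast(0,z)\;\leq\; f(x)\qquad\text{a.e.}
\]
for every $x\in X$ and every $z\in L^0(Z)$.

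Then I would conclude by two applications of the defining (greatest-lower-bound / least-upper-bound) property of the essential infimum and supremum on $\bar L^0$ recalled in Section \ref{se2}. Fixing $x\in X$ and letting $z$ vary, $f(x)$ is an a.e.\ upper bound of the family $\{-\Phi^\ast(0,z)\}_{z\in L^0(Z)}$, hence dominates its essential supremum: $v(DG)=\sup_{z\in L^0(Z)}\{-\Phi^\ast(0,z)\}\leq f(x)$. Now letting $x$ vary, the object $v(DG)$ (constant in $x$) is an a.e.\ lower bound of $\{f(x)\}_{x\in X}$, hence is dominated by its essential infimum: $v(DG)\leq \inf_{x\in X}f(x)=v(PG)$.

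I do not expect a genuine obstacle here; the two points worth a sentence of justification are that the rearrangement of $a+b\geq 0$ into $-a\leq b$ remains valid in $[-\infty,+\infty]$ under the stipulated arithmetic conventions (a short case check on the signs of $\Phi^\ast(0,z)$ and $f(x)$, the only delicate case being $\Phi^\ast(0,z)=-\infty$, where $\Phi^\ast(0,z)+f(x)\geq 0$ forces $f(x)=+\infty$), and that passing to the essential supremum in $z$ and the essential infimum in $x$ is legitimate for the possibly uncountable index sets, which is immediate from the completeness of the order on $\bar L^0$.
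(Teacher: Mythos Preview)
Your argument is correct and is precisely what the paper has in mind: the authors do not spell out a proof, stating only that weak duality ``can be derived directly from the construction'' and that no s-lower semicontinuity is needed, which is exactly the Young--Fenchel route you take via Remark~\ref{YF}. Your extra care with the extended-real arithmetic and the essential sup/inf is appropriate for the $\bar L^0$-valued setting and does not depart from the intended approach.
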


However, of major interest is the situation, where the optimal objective values of the primal and its corresponding dual problem coincide and the dual problem also has optimal solutions, called \textit{strong duality}.
Then Theorem \ref{stable-d} provides the following strong duality statement as a direct consequence.

\begin{theorem}\label{strong-d}
Let $\Phi\colon X\times W\to \bar{L}^0$ be a proper convex and s-lower semi-continuous function such that $0\in {\pr}_W({\dom}(\Phi))$ and
${\pr}_{L^0(Y)\times L^0}({\epi}(\Phi^\ast))$ is $\sigma(L^0(Y),L^0(X))\times \tau$-closed. Then
\begin{equation}\label{eq8}
\inf_{x\in X}\Phi(x,0)=\max_{z\in L^0(Z)} \{-\Phi^\ast(0,z)\}.
\end{equation}
\end{theorem}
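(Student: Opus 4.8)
The plan is to obtain Theorem \ref{strong-d} by specializing Theorem \ref{stable-d} to the dual variable $y=0\in L^0(Y)$. The hypotheses here are verbatim those of Theorem \ref{stable-d}: $\Phi\colon X\times W\to\bar L^0$ is proper convex and s-lower semi-continuous, $0\in\pr_W(\dom(\Phi))$, and $\pr_{L^0(Y)\times L^0}(\epi(\Phi^\ast))$ is $\sigma(L^0(Y),L^0(X))\times\tau$-closed. Hence the ``only if'' direction of Theorem \ref{stable-d} applies and gives, for every $y\in L^0(Y)$,
\[
(\Phi(\cdot,0))^\ast(y)=\sup_{x\in X}\{\langle x,y\rangle-\Phi(x,0)\}=\min_{z\in L^0(Z)}\Phi^\ast(y,z),
\]
where the notation $\min$ records that the infimum over $z\in L^0(Z)$ is attained.

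Next I would evaluate this identity at $y=0$. Since $\langle x,0\rangle=0$ for all $x\in X$, the definition of the conjugate gives $(\Phi(\cdot,0))^\ast(0)=\sup_{x\in X}\{-\Phi(x,0)\}$, and because negation reverses essential suprema and infima in the (Dedekind-complete) pointwise a.e.\ order on $\bar L^0$, this equals $-\inf_{x\in X}\Phi(x,0)$; note $\Phi(\cdot,0)=f$ is proper, so the infimum is a well-defined element of $\bar L^0$. On the right-hand side, $\min_{z\in L^0(Z)}\Phi^\ast(0,z)$ is attained, say at $\bar z\in L^0(Z)$, whence $-\min_{z\in L^0(Z)}\Phi^\ast(0,z)=\max_{z\in L^0(Z)}\{-\Phi^\ast(0,z)\}$ with the maximum attained at the same $\bar z$. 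Combining the two sides of the evaluated identity yields
\[
-\inf_{x\in X}\Phi(x,0)=-\max_{z\in L^0(Z)}\{-\Phi^\ast(0,z)\},
\]
and multiplying by $-1$ gives exactly \eqref{eq8}.

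There is no real obstacle at this stage: the substantive work — the conditional Fenchel--Moreau representation of Theorem \ref{t:FM2}, the biconjugation identity \eqref{eq1}, the lower semi-continuity invariance of Lemma \ref{l:lsc}, and especially the equivalence in Theorem \ref{stable-d} turning the closedness-type regularity condition into attainment of the conjugate infimum — has already been carried out. The only points worth stating explicitly are that $\langle x,0\rangle=0$ makes the supremum in the middle term of \eqref{eq7} collapse to $-\inf_{x\in X}\Phi(x,0)$, and that ``$\min$'' in \eqref{eq7} is precisely the attainment needed to legitimately write ``$\max$'' in \eqref{eq8}; both are immediate, so the proof is short.
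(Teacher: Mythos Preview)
Your proof is correct and follows precisely the paper's approach: the paper states Theorem \ref{strong-d} as a direct consequence of Theorem \ref{stable-d}, and you have simply spelled out the specialization to $y=0$ together with the routine sign manipulation that turns \eqref{eq7} into \eqref{eq8}.
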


\begin{remark}\label{r:stable}
The assertion of Theorem \ref{stable-d} can be seen as a \textit{stable strong duality} statement for the primal-dual pair $(PG)$-$(DG)$, i.e.~for each $y\in L^0(Y)$ there is  strong duality for the primal-dual pairs of vector optimization problems\\

\noindent $(PG_{y})$\hfill$\inf\limits_{x \in X} \{\Phi(x,0) - \langle x, y\rangle\}$,\hfill$\:$\\

\noindent and\\

\noindent $(DG_{y})$\hfill$\sup\limits_{z \in L^0(Z)} \{-\Phi^\ast(y,z)\}$,\hfill$\:$\\

where $(PG_{y})$ was obtained by linearly perturbing the objective function of $(PG)$, while $(DG_{y})$ is its corresponding conjugate dual problem. Thus $(PG)$ is embedded in the family of optimization problems $\{(PG_{y}): y\in L^0(Y)\}$, where it coincides with $(PG_{0})$ and a similar observation is valid for $(DG)$ as well. Note also that, by construction, whenever $y\in L^0(Y)$ one has $v(DG_{y})\leq v(PG_{y})$, i.e. for each of these pairs of primal-dual vector optimization problems there is always weak duality.
\end{remark}

By means of the strong duality statement one can derive necessary and sufficient optimality conditions for the primal-dual pair $(PG)$-$(DG)$.

\begin{corollary}\label{optcond}
Let $\Phi\colon X\times W\to \bar{L}^0$ be a proper convex and s-lower semi-continuous function such that $0\in {\pr}_W({\dom}(\Phi))$.
When $\bar x\in X$ is an optimal solution to the problem $(PG)$ and ${\pr}_{L^0(Y)\times L^0}({\epi}(\Phi^\ast))$ is $\sigma(L^0(Y),L^0(X))\times \tau$-closed, then there exists an optimal solution $\bar z\in L^0(Z)$ of $(DG)$ such that $\Phi (\bar x, 0) + \Phi^\ast (0, \bar z) = 0$. Conversely, given $\bar x\in X$ and $\bar z\in L^0(Z)$ such that $\Phi (\bar x, 0) + \Phi^\ast (0, \bar z) = 0$, then $\bar x$ is an optimal solution to $(PG)$, $\bar z$ one of $(DG)$ and there is strong duality for the primal-dual pair $(PG)$-$(DG)$.
\end{corollary}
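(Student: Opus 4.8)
The plan is to obtain both implications from the strong duality statement (Theorem~\ref{strong-d}), weak duality (Proposition~\ref{weak-d}) and the Young--Fenchel type inequality (Remark~\ref{YF}); the only delicate point is the $\bar L^0$-valued arithmetic, for which I first record some a priori finiteness facts. Fixing $x_0\in X$ with $\Phi(x_0,0)\in L^0$ (which exists since $0\in\pr_W(\dom(\Phi))$), one has $v(PG)=\inf_{x\in X}\Phi(x,0)\le\Phi(x_0,0)<+\infty$ a.e., while the Young--Fenchel type inequality gives $\Phi^\ast(0,z)\ge\langle(x_0,0),(0,z)\rangle-\Phi(x_0,0)=-\Phi(x_0,0)>-\infty$ a.e.\ for every $z\in L^0(Z)$, so $-\Phi^\ast(0,z)<+\infty$ a.e.\ and in particular $v(DG)<+\infty$ a.e. Throughout, optimality is understood with respect to the pointwise a.e.\ order, so that $\bar x$ being an optimal solution to $(PG)$ means $\Phi(\bar x,0)=\inf_{x\in X}\Phi(x,0)$, the essential infimum being attained.

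For the first implication, since $\bar x$ is optimal, properness of $\Phi$ forces $\Phi(\bar x,0)>-\infty$ pointwise, and together with the bound above $\Phi(\bar x,0)=v(PG)\in L^0$. The closedness hypothesis lets me invoke Theorem~\ref{strong-d}, which supplies $\bar z\in L^0(Z)$ with $-\Phi^\ast(0,\bar z)=\max_{z\in L^0(Z)}\{-\Phi^\ast(0,z)\}=\inf_{x\in X}\Phi(x,0)=\Phi(\bar x,0)$. In particular $\Phi^\ast(0,\bar z)=-\Phi(\bar x,0)\in L^0$, and rearranging (legitimate since both summands are finite a.e.) yields $\Phi(\bar x,0)+\Phi^\ast(0,\bar z)=0$; moreover $\bar z$ attains $v(DG)$ by construction, hence is optimal for $(DG)$.

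For the converse, starting from $\Phi(\bar x,0)+\Phi^\ast(0,\bar z)=0$ I first note that both summands lie in $L^0$: each is $>-\infty$ (by properness of $\Phi$ and by the lower bound on $\Phi^\ast(0,\cdot)$ recorded above), hence each is also $<+\infty$ a.e.\ because their sum vanishes. Applying the Young--Fenchel type inequality to $\Phi$ at $(x,0)\in X\times W$ and the dual element $(0,\bar z)\in L^0(Y)\times L^0(Z)$ gives $\Phi^\ast(0,\bar z)+\Phi(x,0)\ge\langle(x,0),(0,\bar z)\rangle=0$ for every $x\in X$, i.e.\ $\Phi(x,0)\ge-\Phi^\ast(0,\bar z)=\Phi(\bar x,0)$ for all $x\in X$; this pointwise a.e.\ comparison shows that $\bar x$ attains $\inf_{x\in X}\Phi(x,0)$, so $\bar x$ is optimal for $(PG)$ with $v(PG)=\Phi(\bar x,0)$. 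Weak duality (Proposition~\ref{weak-d}) then gives $v(DG)\le v(PG)=-\Phi^\ast(0,\bar z)\le\sup_{z\in L^0(Z)}\{-\Phi^\ast(0,z)\}=v(DG)$, whence all these terms coincide; thus the dual supremum is attained at $\bar z$ and $v(DG)=v(PG)$, i.e.\ there is strong duality for the pair $(PG)$--$(DG)$. Equivalently, by Remark~\ref{r:subdiff} the hypothesis $\Phi(\bar x,0)+\Phi^\ast(0,\bar z)=\langle(\bar x,0),(0,\bar z)\rangle$ reads $(0,\bar z)\in\partial\Phi(\bar x,0)$.

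I do not foresee a genuine obstacle here: once Theorem~\ref{strong-d} is in hand the argument is short, and the only thing demanding care is the extended-real-valued bookkeeping — establishing finiteness a.e.\ of all relevant values before manipulating the equality, and keeping every inequality in the pointwise a.e.\ sense so that ``optimal'' is used consistently in the essential-infimum sense.
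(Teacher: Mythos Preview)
Your proof is correct and follows essentially the same approach as the paper: the forward direction invokes Theorem~\ref{strong-d} to produce $\bar z$ and then combines with optimality of $\bar x$, while the converse combines the Young--Fenchel inequality with weak duality (Proposition~\ref{weak-d}) to obtain the chain $\Phi(\bar x,0)=v(PG)=v(DG)=-\Phi^\ast(0,\bar z)$. The paper's argument is terser, simply writing this chain down ``keeping in mind Proposition~\ref{weak-d}''; your version adds explicit finiteness bookkeeping for the $\bar L^0$-arithmetic, which is a reasonable precaution given the conventions in force but not something the paper spells out.
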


\begin{proof}
By Theorem \ref{strong-d}, the existence of an optimal solution $\bar z\in L^0(Z)$ to $(DG)$ such that
$\inf_{x\in X}\Phi(x, 0)=\max_{z\in L^0(Z)}\{-\Phi^\ast(0, z)\}= -\Phi^\ast(0, \bar z)$ is secured. Since $\bar x\in X$ is an optimal solution to the problem $(PG)$, it follows that $\Phi (\bar x, 0) + \Phi^\ast (0, \bar z) = 0$.

Conversely, keeping in mind Proposition \ref{weak-d}, $\Phi (\bar x, 0) + \Phi^\ast (0, \bar z) = 0$ means actually
$$\Phi (\bar x, 0)=\min_{x\in X}\Phi(x, 0)=\max_{z\in L^0(Z)}\{-\Phi^\ast(0, z)\}=-\Phi^\ast (0, \bar z),$$
which implies the desired conclusion.
\end{proof}
\begin{remark}\label{optcond:s}
By Remark \ref{r:subdiff}, the optimality condition $\Phi (\bar x, 0) + \Phi^\ast (0, \bar z) = 0$ given in Corollary \ref{optcond} can be reformulated by means of the subdifferential as $(0, \bar z)\in \partial \Phi (\bar x, 0)$.
\end{remark}
Another consequence of the strong duality statement is the following Farkas type statement. 
\begin{corollary}\label{alt}
Let $\Phi\colon X\times W\to \bar{L}^0$ be a proper convex and s-lower semi-continuous function such that $0\in {\pr}_W({\dom}(\Phi))$ and ${\pr}_{L^0(Y)\times L^0}({\epi}(\Phi^\ast))$ is $\sigma(L^0(Y),L^0(X))\times \tau$-closed. The following statements are equivalent. 
\begin{itemize}
  \item [(i)] $x\in X$ $\Rightarrow$ $\Phi (x, 0) \geq 0$.
  \item [(ii)] $\exists z\in L^0(Z)$: $\Phi^\ast (0, z)\leq 0$. 
\end{itemize}
\end{corollary}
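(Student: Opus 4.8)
The plan is to obtain Corollary \ref{alt} as an immediate consequence of the strong duality statement in Theorem \ref{strong-d}, whose hypotheses coincide exactly with the ones assumed here; the only work is to rephrase the pointwise-almost-everywhere assertions (i) and (ii) as statements about the essential infimum of the primal objective and the essential supremum of the dual objective over $L^0(Z)$.

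First I would note that (i) is equivalent to the single inequality $\inf_{x\in X}\Phi(x,0)\geq 0$ in $\bar{L}^0$: if $\Phi(x,0)\geq 0$ for all $x$, then $0$ is a lower bound of $\{\Phi(x,0):x\in X\}$ and hence does not exceed its essential infimum, while conversely $\Phi(x,0)\geq\inf_{x'\in X}\Phi(x',0)\geq 0$ for each fixed $x$. Next, Theorem \ref{strong-d} yields $\inf_{x\in X}\Phi(x,0)=\max_{z\in L^0(Z)}\{-\Phi^\ast(0,z)\}$, and the use of $\max$ rather than $\sup$ records that this common value is attained at some $\bar z\in L^0(Z)$, so that $\inf_{x\in X}\Phi(x,0)=-\Phi^\ast(0,\bar z)$.

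From here the equivalence is immediate. If (i) holds, then $-\Phi^\ast(0,\bar z)=\inf_{x\in X}\Phi(x,0)\geq 0$, i.e.\ $\Phi^\ast(0,\bar z)\leq 0$, which is (ii) with $z=\bar z$. Conversely, if $\Phi^\ast(0,z)\leq 0$ for some $z\in L^0(Z)$, then $-\Phi^\ast(0,z)\geq 0$, whence $\inf_{x\in X}\Phi(x,0)=\max_{z'\in L^0(Z)}\{-\Phi^\ast(0,z')\}\geq -\Phi^\ast(0,z)\geq 0$, and the first step gives (i). The one point that genuinely matters is the attainment of the maximum supplied by Theorem \ref{strong-d}: it is precisely what turns the implication (i)$\Rightarrow$(ii) into the production of an actual $z\in L^0(Z)$, rather than merely an approximate Farkas alternative; everything else is routine bookkeeping with the order and the essential infimum/supremum in $\bar{L}^0$.
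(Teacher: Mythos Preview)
Your proposal is correct and follows precisely the route the paper indicates: the corollary is presented there without proof, merely as ``another consequence of the strong duality statement'' (Theorem \ref{strong-d}), and your argument spells out exactly that deduction. Your observation that the attainment of the maximum in Theorem \ref{strong-d} is what makes (i)$\Rightarrow$(ii) produce an actual $z\in L^0(Z)$ is the essential point; note also that for (ii)$\Rightarrow$(i) weak duality (Proposition \ref{weak-d}) already suffices, so the closedness regularity condition is only needed for the forward implication.
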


\section{Special cases} \label{sec4}

In this section the general results are specialized for unconstrained vector optimization problems with composite objective functions and for constrained vector optimization problems respectively. At the end of the section examples of nonlinear integral operators and risk measures are discussed.  

\subsection{Unconstrained problems}

Throughout this subsection, consider the duality pairings $(X,X^*,\langle \cdot,\cdot \rangle)$ and $(W,W^*,\langle \cdot,\cdot \rangle)$.
Let $A\colon X\to W$ be a bounded operator, and let $f:X \to \bar{L}^0$ and $g:W \to \bar{L}^0$ be proper convex and $s$-lower semi-continuous  functions fulfilling the feasibility condition $\dom (f) \cap A^{-1}(\dom (g)) \neq \emptyset$. The unconstrained optimization problem\\

\noindent $(PU)$\hfill$\inf\limits_{x \in X} \{f(x) + g(Ax)\}$,\hfill$\:$\\

\noindent is a special case of $(PG)$ by taking the perturbation function $\Phi_U:X\times W\to \bar{L}^0$, $\Phi_U(x, w)= f(x)+g(Ax+w)$, that is proper convex since $f$ and $g$ have the same properties. Moreover, the feasibility condition $0\in {\pr}_W({\dom}(\Phi_U))$ means that there exists an $x\in X$ such that $\Phi_U(x, 0) \in L^0$, i.e. $f(x) + g(Ax)\in L^0$, that happens if and only if $\dom (f) \cap A^{-1}(\dom (g)) \neq \emptyset$. The conjugate $\Phi_U^\ast:L^0(X^*)\times L^0(W^*)\to \bar{L}^0$ of $\Phi_U$ can be computed as
\begin{eqnarray*}
\Phi_U^\ast(x^*, w^*) &=& \sup\limits_{\substack{x\in X,\\ w\in W}} \{\langle x, x^*\rangle + \langle w, w^*\rangle - f(x)-g(Ax+w)\}\\
&=& \sup\limits_{\substack{x\in X,\\ u\in W}} \{\langle x, x^*\rangle + \langle u-Ax, w^*\rangle - f(x)-g(u)\}\\
&=& \sup\limits_{x\in X} \{\langle x, x^*\rangle - \langle x, A^*w^*\rangle - f(x)\} + \sup\limits_{u\in W} \{\langle u, w^*\rangle - g(u)\}\\
&=& f^\ast(x^*-A^*w^*) + g^\ast(w^*).
\end{eqnarray*}
Thus the {\it Fenchel dual} to $(PU)$ turns out to be\\

\noindent $(DU)$\hfill$\sup\limits_{w^* \in L^0(W^*)} \{-f^\ast(-A^*w^*) - g^\ast(w^*)\}$.\hfill$\:$\\

The weak duality statement for the primal-dual pair $(PU)$-$(DU)$ follows by construction (or can be deduced from Theorem \ref{weak-d}).
\begin{theorem}\label{U:wd}
It holds $v(DU)\leq v(PU)$.
\end{theorem}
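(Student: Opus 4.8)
The plan is to derive $v(DU)\le v(PU)$ as a special case of the abstract weak duality already available, and, alternatively, to give a short self-contained argument based on the Young--Fenchel type inequality. For the first route, note that $(PU)$ is precisely $(PG)$ for the perturbation function $\Phi_U$, since $\Phi_U(x,0)=f(x)+g(Ax)$, so $v(PU)=v(PG)$; and evaluating the conjugate $\Phi_U^\ast(x^\ast,w^\ast)=f^\ast(x^\ast-A^\ast w^\ast)+g^\ast(w^\ast)$ computed above at $x^\ast=0$ gives $\Phi_U^\ast(0,w^\ast)=f^\ast(-A^\ast w^\ast)+g^\ast(w^\ast)$, so the conjugate dual $(DG)$ attached to $\Phi_U$ is exactly $(DU)$ and $v(DU)=v(DG)$. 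Proposition \ref{weak-d} then yields $v(DU)=v(DG)\le v(PG)=v(PU)$; as remarked before Proposition \ref{weak-d}, this step needs neither s-lower semicontinuity of $\Phi_U$ nor the feasibility condition.

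For the direct argument, I would fix $x\in X$ and $w^\ast\in L^0(W^\ast)$ and combine two instances of the Young--Fenchel type inequality of Remark \ref{YF}: applying it to $f$ with the dual element $-A^\ast w^\ast\in L^0(X^\ast)$, and using the adjoint identity $\langle x,-A^\ast w^\ast\rangle=-\langle Ax,w^\ast\rangle$ (valid for all $x\in X$ and $w^\ast\in L^0(W^\ast)$ by the extension of $A^\ast$ recalled in Section \ref{se2}), gives $f^\ast(-A^\ast w^\ast)+f(x)\ge-\langle Ax,w^\ast\rangle$, while applying it to $g$ with the dual element $w^\ast$ gives $g^\ast(w^\ast)+g(Ax)\ge\langle Ax,w^\ast\rangle$. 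Adding these two a.e.\ inequalities, the pairing terms cancel and one obtains
\[
f(x)+g(Ax)\ \ge\ -f^\ast(-A^\ast w^\ast)-g^\ast(w^\ast)\qquad\text{a.e.\ on }\Omega .
\]

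Finally I would pass to the order-theoretic extrema. Since the last display holds for every $x\in X$ with a right-hand side independent of $x$, that right-hand side is an a.e.\ lower bound of the family $\{f(x)+g(Ax):x\in X\}$ and is therefore dominated by its essential infimum; thus $\inf_{x\in X}\{f(x)+g(Ax)\}\ge-f^\ast(-A^\ast w^\ast)-g^\ast(w^\ast)$ for every $w^\ast$, and taking the essential supremum over $w^\ast\in L^0(W^\ast)$ gives $v(PU)\ge v(DU)$. The only genuinely delicate point --- and the closest thing to an obstacle --- is this last step, namely that essential infima and suprema in $\bar{L}^0$ respect a.e.\ inequalities; this is immediate from the completeness of the pointwise a.e.\ order recalled in Section \ref{se2}, but one should keep the arithmetic conventions ($+\infty+(-\infty)=+\infty$, etc.) in mind when some conjugate values are infinite, which is exactly why it is convenient to invoke Remark \ref{YF} rather than rederive the Young--Fenchel inequality by hand.
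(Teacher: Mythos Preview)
Your proposal is correct, and your first route---identifying $(PU)$ and $(DU)$ as the specializations of $(PG)$ and $(DG)$ for $\Phi_U$ and then invoking Proposition~\ref{weak-d}---is exactly the paper's own argument, which simply states that the weak duality ``follows by construction (or can be deduced from'' the general weak duality). Your second, self-contained derivation via the Young--Fenchel inequalities is a valid and standard alternative that the paper does not spell out; it is essentially how one would prove Proposition~\ref{weak-d} itself in this special case, and your handling of the essential infimum/supremum step and the arithmetic conventions is fine.
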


The other general results in Section \ref{se3} can be specialized for the primal-dual pair $(PU)$-$(DU)$ as follows.

\begin{theorem}\label{U:all}
Let $A\colon X\to W$ be a bounded linear operator, $f\colon X\to \bar{L}^0$ and $g\colon W\to \bar{L}^0$ be proper convex and $s$-lower semi-continuous such that $\dom (f) \cap A^{-1}(\dom (g)) \neq \emptyset$. For each $x^*\in L^0(X^*)$, one has 
$$
(f+g\circ A)^\ast(x^*)={\cl}(\inf_{w^*\in L^0(W^*)} \{f^\ast(\cdot-A^*w^*) + g^\ast(w^*)\})(x^*).
$$
This equality can be refined for each $x^*\in L^0(X^*)$ to
$$
(f + g\circ A)^\ast(x^*)=\min_{w^*\in L^0(W^*)} \{f^\ast(x^*-A^*w^*) + g^\ast(w^*)\}
$$
if and only if $\epi (f^\ast) + (A^*\times \id_{L^0})(\epi (g^\ast))$ is
$\sigma(L^0(X^*),L^0(X))\times \tau$-closed. If this condition is fulfilled,
there is strong duality for the primal-dual pair $(PU)$-$(DU)$, i.e. there exists an optimal solution $\bar w^*\in L^0(W^*)$ to $(DU)$ such that
$$
\inf_{x\in X}\{f(x) + g(Ax)\}=\max_{w^*\in L^0(W^*)} \{-f^\ast(-A^*w^*) - g^\ast(w^*)\}=-f^\ast(-A^*\bar w^*) - g^\ast(\bar w^*),
$$
and, when $\bar x\in X$ is an optimal solution to the problem $(PU)$, the following optimality conditions are fulfilled: 
\begin{enumerate}
  \item [(i)] $f (\bar x) + f^\ast(-A^*\bar w^*) = -\langle A\bar x, \bar w^*\rangle$,
  \item [(ii)] and $g (A\bar x) + g^\ast(\bar w^*) = \langle A\bar x, \bar w^*\rangle$.
\end{enumerate}
Vice versa, given $\bar x\in X$ and $\bar w^*\in L^0(W^*)$ such that $(i)$-$(ii)$ hold, then $\bar x$ is an optimal solution to $(PU)$, $\bar w^*$ one to $(DU)$ and there is strong duality for the primal-dual pair $(PU)$-$(DU)$.
\end{theorem}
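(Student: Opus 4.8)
The plan is to exhibit the primal-dual pair $(PU)$-$(DU)$ as the instance of $(PG)$-$(DG)$ generated by the perturbation function $\Phi_U\colon X\times W\to\bar{L}^0$, $\Phi_U(x,w)=f(x)+g(Ax+w)$, and then to read off every assertion by specializing the results of Section \ref{se3}. First I would check that $\Phi_U$ satisfies the standing hypotheses of those results: it is proper and convex, and $0\in{\pr}_W({\dom}(\Phi_U))$ holds precisely because $\dom(f)\cap A^{-1}(\dom(g))\neq\emptyset$ (both already recorded in the discussion preceding the statement), while it is s-lower semi-continuous because $f$ and $g$ are, the bounded operator $A$ extends to an $L^0$-operator that is $\sigma(L^0(X),L^0(X^*))$-to-$\sigma(L^0(W),L^0(W^*))$ continuous (since $\langle Ax,w^*\rangle=\langle x,A^*w^*\rangle$), and s-lower semi-continuity is stable under sums and under precomposition with continuous affine maps. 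Together with the computation $\Phi_U^\ast(x^*,w^*)=f^\ast(x^*-A^*w^*)+g^\ast(w^*)$ already carried out above, the first displayed equality of the theorem is then literally \eqref{eq0} of Lemma \ref{MR} for $\Phi_U$, evaluated at $x^*\in L^0(X^*)$ and using $\Phi_U(\cdot,0)=f+g\circ A$.

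The heart of the proof is to match the abstract closedness condition of Theorem \ref{stable-d} with the concrete one, i.e.\ to establish
\[
{\pr}_{L^0(X^*)\times L^0}({\epi}(\Phi_U^\ast))=\epi(f^\ast)+(A^*\times\id_{L^0})(\epi(g^\ast)).
\]
For the inclusion from left to right, a pair $(x^*,t)$ in the left-hand side admits some $w^*\in L^0(W^*)$ with $f^\ast(x^*-A^*w^*)+g^\ast(w^*)\leq t$; since $f$ and $g$ are proper, $f^\ast$ and $g^\ast$ are everywhere $>-\infty$, hence both summands are finite a.e., so putting $t_1:=f^\ast(x^*-A^*w^*)$ and $t_2:=t-t_1$ gives $(x^*-A^*w^*,t_1)\in\epi(f^\ast)$, $(w^*,t_2)\in\epi(g^\ast)$ and $(x^*,t)=(x^*-A^*w^*,t_1)+(A^*w^*,t_2)$. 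The opposite inclusion is the same computation read backwards. Granting this identity, Theorem \ref{stable-d} applied to $\Phi_U$ states exactly that $\epi(f^\ast)+(A^*\times\id_{L^0})(\epi(g^\ast))$ is $\sigma(L^0(X^*),L^0(X))\times\tau$-closed if and only if $(f+g\circ A)^\ast(x^*)=\min_{w^*\in L^0(W^*)}\{f^\ast(x^*-A^*w^*)+g^\ast(w^*)\}$ for all $x^*$, which is the refined equality.

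Assume this closedness condition holds. Then Theorem \ref{strong-d} for $\Phi_U$ gives $\inf_{x\in X}\{f(x)+g(Ax)\}=\max_{w^*\in L^0(W^*)}\{-\Phi_U^\ast(0,w^*)\}$; equivalently, evaluating the refined equality at $x^*=0$ and using $(f+g\circ A)^\ast(0)=-\inf_{x\in X}\{f(x)+g(Ax)\}$ yields the strong duality equality with the dual supremum attained at some $\bar w^*\in L^0(W^*)$. For the optimality conditions I would invoke Corollary \ref{optcond} for $\Phi_U$: if $\bar x$ is optimal for $(PU)$, there is $\bar w^*$ optimal for $(DU)$ with $\Phi_U(\bar x,0)+\Phi_U^\ast(0,\bar w^*)=0$, that is, $[f(\bar x)+f^\ast(-A^*\bar w^*)]+[g(A\bar x)+g^\ast(\bar w^*)]=0$. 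By the Young-Fenchel type inequality of Remark \ref{YF}, the first bracket is $\geq-\langle A\bar x,\bar w^*\rangle$ and the second is $\geq\langle A\bar x,\bar w^*\rangle$; since the two lower bounds sum to $0$ and each bracket dominates its own bound a.e., both inequalities must be equalities a.e., which is precisely (i) and (ii). Conversely, adding (i) and (ii) recovers $\Phi_U(\bar x,0)+\Phi_U^\ast(0,\bar w^*)=0$, and the converse part of Corollary \ref{optcond} then delivers the optimality of $\bar x$ and $\bar w^*$ together with strong duality. The only step that is not routine bookkeeping on top of the results of Section \ref{se3} is the epigraph identity of the second paragraph, where properness of $f$ and $g$ is exactly what legitimizes splitting the inequality $f^\ast(\,\cdot\,)+g^\ast(\,\cdot\,)\leq t$ into two separate epigraph memberships.
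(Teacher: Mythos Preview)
Your proposal is correct and follows essentially the same route as the paper: specialize $\Phi_U(x,w)=f(x)+g(Ax+w)$, compute $\Phi_U^\ast$, identify ${\pr}_{L^0(X^*)\times L^0}({\epi}(\Phi_U^\ast))$ with $\epi(f^\ast)+(A^*\times\id_{L^0})(\epi(g^\ast))$, and then read off the statements from Lemma \ref{MR}, Theorem \ref{stable-d}, Theorem \ref{strong-d} and Corollary \ref{optcond} together with the Young-Fenchel inequalities of Remark \ref{YF}. Your treatment is in fact slightly more explicit than the paper's in two places: you spell out why $\Phi_U$ is s-lower semi-continuous (via the $\sigma$-$\sigma$ continuity of the extended $A$), and you note that properness of $f$ and $g$ is what guarantees both summands $f^\ast(x^*-A^*w^*)$ and $g^\ast(w^*)$ lie in $L^0$, so that the epigraph splitting is legitimate---the paper performs the same split (writing $(x^*-A^*w^*,\,t-g^\ast(w^*))\in\epi f^\ast$) but leaves this finiteness implicit.
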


\begin{proof}
The first result follows from Lemma \ref{MR}, taking into account the formula of $\Phi_U^\ast$ computed above and the fact that $\Phi_U$ is s-lower semi-continuous since so are $f$ and $g$.

To derive the next equivalence from Theorem \ref{stable-d}, one should note that $(y, t)\in {\pr}_{L^0(X^*)\times L^0}({\epi}(\Phi_U^\ast))$ holds if and only if there is some $w^*\in L^0(W^*)$ such that $f^\ast(x^*-A^*w^*) + g^\ast(w^*) \leq t$, that can be rewritten as $(x^*-A^*w^*, t-g^\ast(w^*))\in \epi f^\ast$, i.e. $(x^*, t) \in \epi (f^\ast) + (A^*\times \id_{L^0})(\epi (g^\ast))$.

The strong duality statement is a consequence of this equivalence (or can be obtained directly from Theorem \ref{strong-d} by taking into account the above calculations).

From Theorem \ref{optcond} one deduces that
$$f (\bar x) + g (A\bar x) + f^\ast(-A^*\bar w^*) + g^\ast(\bar w^*) =0.$$
The optimality conditions $(i)$-$(ii)$ can be derived from Remark \ref{YF}.
\end{proof}

\begin{remark}
From the above assertions one can deduce similar statements for vector optimization problems consisting in minimizing the sum of finitely many functions. These can be obtained also directly from the general case, in which situation it is no longer necessary to consider the duality pairings $(X,X^*,\langle \cdot,\cdot \rangle)$ and $(W,W^*,\langle \cdot,\cdot \rangle)$.
\end{remark}

\begin{remark}
Note also that the optimal objective value of $(DU)$ is actually equal to $-((f^\ast\circ A^*) \square g^\ast)(0)$. 
By Remark \ref{r:stable}, one can deduce that
$\epi (f^\ast) + (A^*\times \id_{L^0})(\epi (g^\ast))$ is $\sigma(L^0(X^*),L^0(X))\times \tau$-closed if and only if $(f+g\circ A)^\ast=(f^\ast\circ A^*) \square g^\ast$ with the infimum in the infimal convolution attained.
\end{remark}

\subsection{Constrained problems}

Let $S \subseteq X$ be a nonempty, convex and $s$-closed set, and let $f : X \to \bar {L}^0$ be proper convex and $s$-lower semi-continuous such that the feasibility condition $\dom (f) \cap  S \neq \emptyset$ is satisfied. The primal problem is\\

\noindent $(PC)$\hfill$\inf\limits_{x \in S} f(x)$.\hfill$\:$\\

A perturbation function that can be employed to assign a dual problem to $(PC)$ as a special case of $(DG)$ is the Fenchel-Lagrange type one (cf. \cite{DVO, BOT}):
$$\Phi_{FL} : X \times X \to \bar{L}^0,\
\Phi_{FL} (x,u) = \left \{ \begin{array}{ll}
f(x+u),& \ \mbox{if} \ x \in S, \\
+\infty,& \ \mbox{otherwise}.
\end{array}\right.$$
It is proper convex and $s$-lower semi-continuous because of the similar properties of $f$ and $S$, and due to the feasibility condition. 
Its conjugate $\Phi_{FL}^\ast : L^0(Y) \times L^0(Y) \to \bar {L}^0$ can be computed as 
\begin{eqnarray*}
\Phi_{FL}^\ast(y,v) &=& \sup \limits_{\substack{x, u\in X, \\ x \in S}} \{\langle x, y\rangle + \langle u, v\rangle - f(x+ u)\}\\
&=& \sup \limits_{\substack{p\in X, x\in S}} \{\langle x, y\rangle + \langle p-x, v\rangle - f(p)\}\\
&=& \sup \limits_{x\in S} \{\langle x, y-v\rangle \} +\sup \limits_{p\in X} \{\langle p, v\rangle  - f(p)\} \\
&=& f^\ast(v) + \delta_S^\ast(y-v).
\end{eqnarray*}
The dual problem it attaches to $(PC)$ is the \textit{Fenchel-Lagrange dual problem}:\\

\noindent $(DC^{FL})$\hfill$\sup\limits_{\substack{v \in L^0(Y)}} \big \{-f^\ast(v) - \delta_S^\ast (-v)\big \}$,\hfill$\:$\\

that can be reformulated as\\

\noindent $(DC^{FL})$\hfill$ -(f^\ast \square \delta_S^\ast)(0)$.\hfill$\:$\\

The weak duality statement for the primal-dual pair $(PC)$-$(DC^{FL})$ follows by construction (or can be deduced from Theorem \ref{weak-d}).
\begin{theorem}\label{FL:wd}
It holds $v(DC^{FL})\leq v(PC)$.
\end{theorem}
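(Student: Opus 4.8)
The plan is to derive weak duality for the pair $(PC)$-$(DC^{FL})$ directly from the general weak duality statement in Proposition \ref{weak-d}, applied to the Fenchel-Lagrange perturbation function $\Phi_{FL}$. The only thing that needs checking is that the primal and dual problems arising from $\Phi_{FL}$ via the general scheme really are $(PC)$ and $(DC^{FL})$ as written.

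First I would verify that $\Phi_{FL}(x,0)$ coincides with the objective of $(PC)$ in the appropriate sense: by definition $\Phi_{FL}(x,0) = f(x)$ if $x\in S$ and $\Phi_{FL}(x,0) = +\infty$ otherwise, so $\Phi_{FL}(x,0) = f(x) + \delta_S(x)$, whence $\inf_{x\in X}\Phi_{FL}(x,0) = \inf_{x\in S} f(x) = v(PC)$. Next I would invoke the conjugate computation carried out above, namely $\Phi_{FL}^\ast(y,v) = f^\ast(v) + \delta_S^\ast(y-v)$, to identify the general dual $(DG)$ attached to $\Phi_{FL}$. Setting $y=0$ gives $-\Phi_{FL}^\ast(0,v) = -f^\ast(v) - \delta_S^\ast(-v)$, so that $\sup_{v\in L^0(Y)}\{-\Phi_{FL}^\ast(0,v)\}$ is exactly $v(DC^{FL})$ (and equals $-(f^\ast\square\delta_S^\ast)(0)$ by the infimal convolution identity recorded in the earlier example). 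Since $\Phi_{FL}$ is proper — which holds thanks to the feasibility condition $\dom(f)\cap S\neq\emptyset$, guaranteeing some $x$ with $\Phi_{FL}(x,0)\in L^0$ — and since Proposition \ref{weak-d} does not require lower semicontinuity of the perturbation function, it applies and yields $v(DG)\leq v(PG)$, i.e. $v(DC^{FL})\leq v(PC)$.

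Alternatively, and perhaps more transparently, one can give the one-line direct argument: for any $x\in S$ and any $v\in L^0(Y)$, the Young-Fenchel type inequalities of Remark \ref{YF} give $f^\ast(v) + f(x)\geq \langle x,v\rangle$ and $\delta_S^\ast(-v) + \delta_S(x)\geq \langle x,-v\rangle$; adding these and using $\delta_S(x)=0$ for $x\in S$ yields $f(x)\geq -f^\ast(v) - \delta_S^\ast(-v)$ pointwise a.e., and taking the essential infimum over $x\in S$ on the left and the essential supremum over $v\in L^0(Y)$ on the right gives the claim.

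There is essentially no obstacle here: the statement is a bookkeeping consequence of the already-established general theory, and the only mild care needed is to keep the pointwise a.e.\ order in $\bar L^0$ straight when passing to essential infima and suprema, and to note that the feasibility hypothesis is precisely what makes $\Phi_{FL}$ proper so that Proposition \ref{weak-d} is applicable.
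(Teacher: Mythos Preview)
Your proposal is correct and follows exactly the paper's own reasoning: the paper states just before Theorem \ref{FL:wd} that the weak duality ``follows by construction (or can be deduced from Theorem \ref{weak-d})'', i.e.\ from Proposition \ref{weak-d} applied to $\Phi_{FL}$, which is precisely what you do. Your alternative one-line Young--Fenchel argument is the ``by construction'' variant the paper alludes to, so both of your routes coincide with the paper's intended justification.
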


The other results proved in the general case in Section \ref{se3} can be specialized for the primal-dual pair $(PC)$-$(DC^{FL})$. 
\begin{theorem}\label{FL:all}
Let the nonempty convex s-closed set $S \subseteq X$ and the proper convex s-lower semi-continuous function $f : X \to \bar {L}^0$ satisfy $\dom (f) \cap  S \neq \emptyset$.
Then for each $y\in L^0(Y)$, one has
$$
(f+\delta_{S})^\ast(y)={\cl}\big(\inf\limits_{\substack{v\in L^0(Y)}} \big\{f^\ast (v) +\delta_S^\ast (y-v)\big\}\big)
= {\cl}\big(\big(f^\ast \square \delta_S^\ast\big) (y)\big).$$
This equality can be refined for each $y\in L^0(Y)$ to
$$
(f+\delta_{S})^\ast(y)=\min\limits_{\substack{v\in L^0(Y)}} \big\{f^\ast(v) +\delta_S^\ast (y-v)\big\}
=\big(f^\ast \square \delta_S^\ast\big)(y),$$
with the infimum in the infimal convolution attained if and only if the set $\epi (f^\ast) + \epi (\delta_S^\ast)$ is $\sigma(L^0(Y),L^0(X))\times \tau$-closed. 
If this condition is fulfilled there is strong duality for the primal-dual pair $(PC)$-$(DC^{FL})$, i.e.~there exists an optimal solution $\bar v\in L^0(Y)$ to $(DC^{FL})$ such that
$$
\inf\limits_{x \in S} f(x)=\max\limits_{v\in L^0(Y)} \big\{-f^\ast (v) -\delta_S^\ast (-v)\big\} =
-f^\ast (\bar v) -\delta_S^\ast (-\bar v),$$
and, when $\bar x\in X$ is an optimal solution to $(PC)$, the following optimality conditions are fulfilled:
\begin{enumerate}
  \item [(i)] $f (\bar x) + f^\ast (\bar v) = \langle \bar x, \bar v\rangle$,
  \item [(ii)] $\delta_S^\ast (-\bar v)= -\langle \bar x, \bar v\rangle$. 
\end{enumerate}
Vice versa, given $\bar x\in X$ and $\bar v\in L^0(Y)$ such that $(i)$-$(ii)$ hold, then $\bar x$ is an optimal solution to $(PC)$, $\bar v$ one to $(DC^{FL})$, and there is strong duality for the primal-dual pair $(PC)$-$(DC^{FL})$.
\end{theorem}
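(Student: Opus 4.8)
The plan is to obtain every assertion by specializing the general results of Section~\ref{se3} to the Fenchel--Lagrange perturbation $\Phi_{FL}$, whose conjugate $\Phi_{FL}^\ast(y,v)=f^\ast(v)+\delta_S^\ast(y-v)$ has already been computed above. First one records the relevant features of $\Phi_{FL}$: one has $\Phi_{FL}(\cdot,0)=f+\delta_S$; $\Phi_{FL}$ is proper, convex and s-lower semi-continuous because $f$ and $\delta_S$ are (the latter since $S$ is nonempty, convex and s-closed); and the feasibility condition ${\dom}(f)\cap S\neq\emptyset$ is precisely the requirement $0\in{\pr}_X({\dom}(\Phi_{FL}))$ used by the general statements. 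Moreover $f^\ast$ and $\delta_S^\ast$ are proper (so that their epigraphs are legitimate stable sets) and $L^0$-convex (by Remark~\ref{YF}): indeed $\delta_S^\ast(0)=0$ and $\delta_S^\ast>-\infty$ since $S\neq\emptyset$, while $f^\ast>-\infty$ by the definition of the conjugate and $f^\ast\equiv+\infty$ is ruled out by Theorem~\ref{t:FM2} (it would force $f\equiv-\infty$), after which stability and a gluing argument as in the proof of Lemma~\ref{MR} produce a single $y\in L^0(Y)$ with $f^\ast(y)<+\infty$.

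Applying Lemma~\ref{MR} to $\Phi_{FL}$ and substituting the formula for $\Phi_{FL}^\ast$ gives, for every $y\in L^0(Y)$, the equality $(f+\delta_S)^\ast(y)={\cl}(\inf_{v\in L^0(Y)}\{f^\ast(v)+\delta_S^\ast(\cdot-v)\})(y)$, and since by definition $(f^\ast\square\delta_S^\ast)(v)=\inf_{v'\in L^0(Y)}\{f^\ast(v')+\delta_S^\ast(v-v')\}$ (see Section~\ref{se2}) this is exactly the first displayed chain of equalities. For the refinement, Theorem~\ref{stable-d} applied to $\Phi_{FL}$ states that $(f+\delta_S)^\ast(y)=\min_{v\in L^0(Y)}\{f^\ast(v)+\delta_S^\ast(y-v)\}=(f^\ast\square\delta_S^\ast)(y)$ holds for all $y$ if and only if ${\pr}_{L^0(Y)\times L^0}({\epi}(\Phi_{FL}^\ast))$ is $\sigma(L^0(Y),L^0(X))\times\tau$-closed, so it remains only to identify that set: a pair $(y,t)$ lies in it exactly when some $v\in L^0(Y)$ satisfies $f^\ast(v)+\delta_S^\ast(y-v)\leq t$, which, writing $t=s+(t-s)$, is equivalent to $(v,s)\in{\epi}(f^\ast)$ and $(y-v,t-s)\in{\epi}(\delta_S^\ast)$ for a suitable $s\in L^0$, i.e. to $(y,t)\in{\epi}(f^\ast)+{\epi}(\delta_S^\ast)$. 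When this closedness holds, Theorem~\ref{strong-d} for $\Phi_{FL}$, together with $\inf_{x\in X}\Phi_{FL}(x,0)=\inf_{x\in S}f(x)$ and $-\Phi_{FL}^\ast(0,v)=-f^\ast(v)-\delta_S^\ast(-v)$, is the claimed strong duality statement and yields the maximizer $\bar v\in L^0(Y)$.

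For the optimality conditions, if $\bar x$ solves $(PC)$ (so $\bar x\in S\cap{\dom}(f)$), then Corollary~\ref{optcond} provides an optimal $\bar v\in L^0(Y)$ with $\Phi_{FL}(\bar x,0)+\Phi_{FL}^\ast(0,\bar v)=0$, that is $f(\bar x)+f^\ast(\bar v)+\delta_S^\ast(-\bar v)=0$ a.e.; the Young--Fenchel inequalities of Remark~\ref{YF} applied to $f$ and to $\delta_S$ give $f(\bar x)+f^\ast(\bar v)-\langle\bar x,\bar v\rangle\geq0$ and, using $\delta_S(\bar x)=0$, $\delta_S^\ast(-\bar v)+\langle\bar x,\bar v\rangle\geq0$, so, since these two nonnegative elements of $\bar{L}^0$ sum to $0$ a.e., each vanishes a.e., which is precisely $(i)$ and $(ii)$. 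Conversely, adding $(i)$ and $(ii)$ recovers $f(\bar x)+f^\ast(\bar v)+\delta_S^\ast(-\bar v)=0$, and, $\bar x$ being in $S$, this reads $\Phi_{FL}(\bar x,0)+\Phi_{FL}^\ast(0,\bar v)=0$, so the converse part of Corollary~\ref{optcond} makes $\bar x$ optimal for $(PC)$, $\bar v$ optimal for $(DC^{FL})$, and gives strong duality.

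I expect the principal obstacle to be the careful verification, in the conditional almost-everywhere framework, of the set identity ${\pr}_{L^0(Y)\times L^0}({\epi}(\Phi_{FL}^\ast))={\epi}(f^\ast)+{\epi}(\delta_S^\ast)$, which includes the non-automatic check that $f^\ast$ and $\delta_S^\ast$ are genuinely proper so that their epigraphs are well defined and the splitting $t=s+(t-s)$ stays in $L^0$; the rest is routine substitution into the general theorems, the one other delicate point being the pointwise a.e. argument that two nonnegative quantities with sum $0$ must individually vanish, which turns the single complementarity equation into $(i)$--$(ii)$.
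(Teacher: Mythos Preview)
Your proposal is correct and follows essentially the same approach as the paper: specialize Lemma~\ref{MR}, Theorem~\ref{stable-d}, Theorem~\ref{strong-d} and Corollary~\ref{optcond} to the perturbation $\Phi_{FL}$, identify ${\pr}_{L^0(Y)\times L^0}({\epi}(\Phi_{FL}^\ast))$ with $\epi(f^\ast)+\epi(\delta_S^\ast)$, and split the single complementarity equation into $(i)$--$(ii)$ via the Young--Fenchel inequalities. Your write-up is in fact more detailed than the paper's own proof (which just points to the analogous Theorem~\ref{U:all}); in particular, you spell out both inclusions of the set identity and the properness of $f^\ast$ and $\delta_S^\ast$ needed for the splitting $t=s+(t-s)$ to remain in $L^0$, whereas the paper simply takes $s=f^\ast(v)$ without comment.
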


\begin{proof}
The assertions can be deduced from Lemma \ref{MR}, Theorem \ref{stable-d}, Theorem \ref{strong-d} and Theorem \ref{optcond} together with Remark \ref{YF}, respectively, analogously to the proof of Theorem \ref{U:all}, by noting that $(y, t)\in {\pr}_{L^0(Y)\times L^0}({\epi}(\Phi_{FL}^\ast))$ holds if and only if there is $v\in L^0(Y)$ such that $f^\ast (v) +\delta_S^\ast(y-v) \leq t$ that can be rewritten as $(v, f^\ast(v))\in \epi (f^\ast)$ and $(y-v, t-f^\ast(v))\in \epi (\delta_S^\ast)$. Summing these two relations, one gets $(y, t)\in \epi (f^\ast) +  \epi(\delta_S^\ast)$, followed by $(y, t)\in \epi (f^\ast) + \epi (\delta_S^\ast)$.
\end{proof}

\subsection{Examples}\label{se4}

We present two classes of examples to which the results of this article can be applied.
Both classes are of the following general form. Let $(S,\mathcal{S},\nu)$ be a finite measure space, $1<p<\infty$ and $f\colon L^p(S,\mathcal{S},\nu)\to \bar{L}^0(\Omega,\mathcal{F},\mu)$ be a proper convex function. The following \emph{Fatou continuity property} yields the s-lower semi-continuity of $f$, see \cite{DJK18} for a proof.
\begin{itemize}
\item For every sequence $(x_n)$ in $L^p(S,\mathcal{S},\nu)$ such that $\sup_n \|x_n\|_p<\infty$ and $x_n\to x$ a.e., one has $f(x)\leq \liminf f(x_n)$.
\end{itemize}
In this case, $f$ has the representation
\begin{equation}\label{eq:fatou}
f(x)=\sup_{y\in L^0(L^q(S,\mathcal{S},\nu))} \{\langle x,y\rangle - f^\ast(y)\},
\end{equation}
where
\[
f^\ast(y)=\sup_{x\in L^p(S,\mathcal{S},\nu)}\{\langle x,y\rangle - f(x)\}.
\]
This representation is not immediate from Theorem \ref{t:FM2}, see \cite{DJK18} for a proof. The first example is a class of (non)linear integral operators.
\begin{example}
A function $k\colon \Omega\times S\times L^p(S,\mathcal{S},\nu) \to \mathbb{R}$ is said to be a \emph{kernel functional} if the following properties are satisfied:    
\begin{itemize}
\item $k(\omega,s,x)$ is measurable in the product $\Omega\times S$ for all $x\in L^p(S,\mathcal{S},\nu)$, 
\item $\int_S |k(\omega,s,x)|d\nu(s)<\infty$ $\mu$-a.e. $x\in L^p(S,\mathcal{S},\nu)$,  
\item for every sequence $(x_n)$ in $L^p(S,\mathcal{S},\nu)$ such that $\sup_n \|x_n\|_p<\infty$ and $x_n\to x$ a.e., it holds 
\[
k(\omega,\cdot,x)\leq \liminf k(\omega,\cdot,x_n)\quad \mu\text{-a.e.};
\]
\item $k$ is convex in $x$ $\mu$-a.e.   
\end{itemize} 
For a kernel functional $k$, a function $f\colon L^p(S,\mathcal{S},\nu)\to L^0(\Omega,\mathcal{F},\mu)$ defined by
\[
f(x)=\int_S k(\cdot,s,x)d\nu(s)
\]
is called a \emph{(non)linear integral operator}. 
Notice that by Fatou's lemma $f$ satisfies Fatou continuity, and thus is $s$-lower semi-continuous. 
For example, if $k(\omega,s,x)=\tilde{k}(\omega,s)x$ and $\tilde{k}$ is product measurable and satisfies $\int_S |\tilde{k}(\omega,s)|d\nu(s)<\infty$ $\mu$-a.e., then the corresponding integral operator is linear and satisfies all the above properties by the dominated convergence theorem.  
Our vector optimization results could be applied to minimize the sum of two such nonlinear integral operators.   
\end{example}
Our second example are conditional risk measures, see \cite{ACG,ACP,DES,FKF,FRM,FMA} and the references therein for an overview. 
The conditional risk measures associated to a conditional risk acceptance family in \cite{MAG} are defined via a constrained vector optimization problem whose objective vector function takes values in $\bar{L}^0$. 
We consider a definition of a conditional risk measure which is more minimalistic than the one considered in the pertinent literature.
We draw inspiration from the definition provided in \cite{ACG}.
\begin{example}
Let  $(\Omega,\mathcal{F},(\mathcal{F}_t)_{t\geq 0},\mathbb{P})$ be a filtered probability space, $1<p<\infty$ and $t\geq 0$.
A function $\rho_t\colon L^p(\Omega,\mathcal{F},\mathbb{P})\to \bar{L}^0_t:=\bar{L}^0(\Omega,\mathcal{F}_t,\mathbb{P})$ is said to be a \emph{conditional convex risk measure} if the following properties are satisfied:
\begin{itemize}
\item proper convex, 
\item monotonicity, i.e.~$x_1\leq x_2$ implies $\rho_t(x_1)\geq \rho_2(x_2)$, 
\item continuity from above, i.e.~$\rho_t(x_n)\uparrow \rho_t(x)$ a.s.~whenever $x_n\downarrow x$ a.s. 
\end{itemize}
It can be verified that monotonicity and continuity from above imply the Fatou continuity property, see e.g.~\cite[Lemma A.1]{ACG}, or \cite{FOS} for an argument in the static scalar case. Thus we obtain a representation of the form \eqref{eq:fatou}. 
A typical example of a portfolio optimization problem consists in minimizing a risk functional subject to constraints that guarantee the achievement of a certain minimal expected return. 
In the present framework, this can be formulated as a constrained vector optimization problem:\\ 

\hfill$\inf\limits_{x \in S} \rho_t(x),$\hfill$\:$\\

\noindent where $S= \{x \in L^p(\Omega,\mathcal{F},\mathbb{P}): \mathbb{E}[x|\mathcal{F}_t] \geq 0\}$ is $s$-closed. 
\end{example}

\section{Conclusion}\label{se6}

In this paper a new duality approach for vector optimization problems involving objective functions mapping from a Banach space to $\bar {L}^0$ is provided that is a more direct extension of the perturbational one from the scalar case than the existing ones in the literature, by taking advantage  of recent advances in conditional convex analysis. The corresponding strong and stable duality statements, and necessary and sufficient optimality conditions are proven under convexity and topological hypotheses together with closedness type regularity conditions, corresponding Moreau-Rockafellar type formulae being obtained as byproducts. As special cases of the general problem both unconstrained vector optimization problems with composite objective functions and constrained vector optimization problems are worked out. The duality approach is different to the existing ones in the literature, on the one hand because in the constructions conditional analysis is applied, and on the other hand since it covers vector optimization problems for which the classical constructions do not apply. Examples are constructed which illustrate the scope of applications of the obtained theoretical results. 

For future work, we want to complete the proposed duality scheme by providing interiority type regularity conditions for the duality and optimality statements. Although more restrictive than their closedness type counterparts, unlike these, such conditions would be formulated on the underlying  Banach space that could prove to be useful in applications.\\

\end{document}